\date{}
\newcommand{\beqa}{\begin{eqnarray*}}
\newcommand{\eeqa}{\end{eqnarray*}}
\newcommand{\beqn}{\begin{eqnarray}}
\newcommand{\eeqn}{\end{eqnarray}}
\newcommand{\iy}{\infty}
\newcommand{\R}{\mathbb R}
\newcommand{\N}{\mathbb N}
\newcommand{\al}{\alpha}
\newcommand{\be}{\beta}
\newcounter{cnt1}
\newcounter{cnt2}
\newcounter{cnt3}
\newcommand{\blr}{\begin{list}{$($\roman{cnt1}$)$}
 {\usecounter{cnt1} \setlength{\topsep}{0pt}
 \setlength{\itemsep}{0pt}}}
\newcommand{\bla}{\begin{list}{$($\alph{cnt2}$)$}
 {\usecounter{cnt2} \setlength{\topsep}{0pt}
 \setlength{\itemsep}{0pt}}}
\newcommand{\bln}{\begin{list}{$($\arabic{cnt3}$)$}
 {\usecounter{cnt3} \setlength{\topsep}{0pt}
 \setlength{\itemsep}{0pt}}}
\newcommand{\el}{\end{list}}
\newtheorem{thm}{Theorem}[section]
\newtheorem{lem}[thm]{Lemma}
\newtheorem{ex}[thm]{Example}
\newtheorem{Def}[thm]{Definition}
\newtheorem{Prop}[thm]{Proposition}
\newtheorem{rem}[thm]{Remark}
\newcommand{\Rem}{\begin{rem} \rm}
\newcommand{\bdfn}{\begin{Def} \rm}
\newcommand{\edfn}{\end{Def}}
\newcommand{\TFAE}{the following assertions are equivalent: }
\newcommand{\ba}{\begin{array}}
\newcommand{\ea}{\end{array}}
\tikzstyle{vertex}=[scale=0.9,auto=left,circle,fill=black!10,inner
\begin{document}
\title[Characterizations of almost greedy and partially greedy bases]
{Characterizations of almost greedy and partially greedy bases}

\author[Dilworth]{S. J. Dilworth}
\address[S.J. Dilworth]{Department of Mathematics \\
University of South Carolina \\
Columbia, SC 29208\\
U.S.A., \textit{E-mail~:} \textit{dilworth@math.sc.edu}}

\author[Khurana]{Divya Khurana}
\address[Divya Khurana]{Department of Mathematics\\ The Wiezmann Institute of Science,
Rehovot\\ Israel, \textit{E-mail~:}
\textit{divya.khurana@weizmann.ac.il, divyakhurana11@gmail.com}}

\subjclass[2000]{46B15; 41A65}

\keywords{almost greedy basis, partially greedy basis, reverse
partially greedy basis}

\begin{abstract}
We shall present new characterizations of partially greedy and
almost greedy bases. A new class of basis (which we call reverse
partially greedy basis) arises naturally from these
characterizations of partially greedy bases. We also give
characterizations for $1$-partially greedy and $1$-reverse partially greedy
bases.
\end{abstract}

\thanks {The first author was supported by the National Science Foundation under Grant Number DMS--1361461
and by the Workshop in Analysis and Probability at Texas A\&M
University in 2017. The second author was supported by the Israel
Science Foundation (Grant Number--137/16).}

\maketitle

\section{Introduction}
Let $X$ be a real Banach space with a seminormalized basis $(e_n)$
and biorthogonal functionals $(e_n^*)$. $X$ is allowed to be finite-dimensional, in which case $(e_n)$ is a finite algebraic basis for $X$.
 For any $x\in X$ we denote
$supp(x)=\{n:e_n^*(x)\not=0\}$, $P_A(x)=\sum_{i\in A} e_i^*(x)e_i$.
If $A\subset\N$ then $|A|$ denotes the cardinality of $A$,
$1_A=\sum_{i\in A} e_i$, for $A,B\subset \N$ we write $A<B$ if
$max~A<min~B$.

In \cite{KT} Konyagin and Temlyakov introduced the
\textbf{Thresholding Greedy Algorithm (TGA)} $(G_m)$, where $G_m(x)$
is obtained by taking the largest $m$ coefficients in the series
expansion of $x$. For $x\in X$ let $\Lambda_m(x)$ be a set of any
$m$-indices such that
\begin{align*}
\underset{n\in \Lambda_m(x)}{min}|e_n^*(x)|\geq \underset{n\not\in
\Lambda_m(x)}{max}|e_n^*(x)|.
\end{align*}

The $mth$ greedy approximation is given by
\begin{align*}
G_m(x)=\sum_{n\in \Lambda_m(x)}e_n^*(x)e_{n}.
\end{align*}

Konyagin and Temlyakov in \cite{KT} defined a basis $(e_n)$ to be
\textbf{greedy} with constant $C$ if for all $x\in X$, $m\in \N$
\begin{align}\label{g}
\|x-G_m(x)\| \leq C\sigma_m(x)
\end{align}
where $\sigma_m(x)$ is the error in the best $mth$ term
approximation to $x$ and is given by
\begin{align}\label{map}
 \sigma_m(x)=inf~\left\{ \left
\|x-\underset{i\in A}{\sum}a_ie_i\right\|:~|A|=m,~a_i\in \R,~i\in
A\right\}.
\end{align}
 They proved that a basis is greedy if and only if it is
 unconditional and democratic. Recall that a basis $(e_n)$ of
a Banach space is unconditional if any rearrangement of the series
$x=\underset{n\geq1}{\sum}e_n^*(x)e_n$ converges in norm to $x$. A
basis $(e_n)$ is said to be democratic if there exists a constant
$\Gamma \geq 1$ such that $\|1_A\|\leq \Gamma \|1_B\|$ where $A,B$
are finite subsets of $\N$ and $|A|\leq |B|$.

They also introduced the notion of \textbf{quasi-greedy} basis. A
basis is said to be quasi-greedy with constant $C$ if
$\|G_m(x)\|\leq C\|x\|$ for all $x\in X$ and $m\in \N$. Later, Wojtaszczyk
\cite{W} proved that a basis is quasi-greedy if
$G_m(x)\longrightarrow x$ as $m\longrightarrow \infty$ for all $x\in
X$.

The class of \textbf{almost greedy} basis and \textbf{partially
greedy} basis was considered in \cite{S}. A basis $(e_n)$ is said to
be almost greedy with constant $C$ if for all $x\in X$, $m\in \N$
\begin{align}\label{ag}
\|x-G_m(x)\| \leq C {\stackrel{\sim}\sigma}_m(x)
\end{align}
where ${\stackrel{\sim}\sigma}_m(x)$ is the error in the best
projection of $x$ onto a subset of $(e_n)$ with size at most $m$ and
is given by
\begin{align}\label{mpp}
{\stackrel{\sim}\sigma}_m(x)=inf \{\|x-P_A(x)\|:~|A|\leq m\}.
\end{align}
A basis $(e_n)$ is said to be partially greedy basis with constant
$C$  if  for all $x\in X $ and $m\in \N$ we have
\begin{align*}
\|x-G_m(x)\| \leq C
\left\|\sum_{n=m+1}^{\infty}e_i^*(x)e_i\right\|.
\end{align*}
In \cite{S} the authors proved that  almost greedy
bases  are characterized by quasi-greediness and democracy and partially greedy bases are characterized by
quasi-greediness and the  conservative property.  A basis $(e_n)$  of a
Banach space is said to be conservative if there exists a constant
$\Gamma_c$ such that $\|1_A\|\leq \Gamma_c \|1_B\|$ whenever $A<B$
and $|A|\leq |B|$.

Characterizations of greedy, almost greedy and quasi-greedy bases
are known for the constant $C=1$. Albiac and Wojtaszczyk \cite{AW}  characterized
 $1$-greedy bases. Later, Albiac and Ansorena
\cite{ALq}, \cite{AL} characterized  $1$-quasi-greedy and $1$-almost greedy bases.

The \textbf{Weak Thresholding greedy algorithm (WTGA)} with weakness
parameter $\tau$, where $0<\tau<1$, was considered in \cite{T}. For a given
basis $(e_n)$ of $X$ and $x\in X$ we denote by  $\Lambda_m^{\tau}(x)$
any  set of  $m$-indices such that
\begin{align*}
\underset{n\in \Lambda_m^\tau(x)}{min}|e_n^*(x)|\geq \tau
\underset{n\not\in \Lambda_m^\tau(x)}{max}|e_n^*(x)|.
\end{align*}

The $mth$ weak greedy approximation is given by
\begin{align*}
G_m(x)=\sum_{n\in \Lambda_m^{\tau}(x)} e_n^*(x)e_{n}.
\end{align*}

A more restrictive version of weak greedy algorithm known as the
\textbf{branch greedy algorithm (BGA)} was considered in
\cite{DKSW}.

Throughout the paper we will use the following notations. For any
$x\in X$, a greedy ordering for $x$  is a $1-1$ map $\rho:\N\longrightarrow \N$ such
that $supp(x) \subset \rho(x)$ and if $j<k$, then
$|e_{\rho(j)}^*|\geq |e_{\rho(k)}^*|$.

If $\rho$ is a greedy ordering for $x\in X$,  set
$\Lambda_m(x):=\{\rho(1),\cdots, \rho(m)\}$ ,  $\al_m(x):=
min~ \Lambda_m(x)$, and $\be_m(x):= max~ \Lambda_m(x)$.

We will work with the following weaker versions of $(\ref{map})$ and
$(\ref{mpp})$:

\begin{align}\label{a1}
\sigma_m^L(x)= inf \left\{ \|x-\underset{i\in A}{\sum}a_ie_i\|:~|A|=
m,~A<\al_m(x),~a_i\in \mathbb{R},~i\in A\right\},
\end{align}
\begin{align}\label{a2}
\sigma_m^R(x)= inf \left\{ \|x-\underset{i\in A}{\sum}a_ie_i\|:~|A|=
m,~A>\be_m(x),~a_i\in \mathbb{R},~i\in A\right\},
\end{align}
\begin{align}\label{a3}
{\stackrel{\sim}\sigma}_m^L(x)=inf\{ \| x-P_A (x) \|:~|A|\leq
m,~A<\al_m(x)\},
\end{align}

\begin{align}\label{a4}
{\stackrel{\sim}\sigma}_m^R(x)=inf \{\| x-P_A (x) \|:~|A|\leq
m,~A>\be_m(x)\},
\end{align}
for $~x\in X,~m\in \N$.
\begin{rem}  Note that  $\liminf_m \sigma_m^L(x) \ge (1+K_b)^{-1}\|x\|$ and $\liminf_m  \sigma_m^R(x) \ge K_b^{-1}\|x\|$,
where $K_b$ is the basis constant of $(e_n)$, whereas $\lim_m \sigma_m(x)=\lim_m \tilde \sigma_m(x)=0$.
\end{rem}

In this paper we will prove new characterizations of \textbf{almost
greedy} and \textbf{ partially greedy} bases. We will prove that a basis
is almost greedy or partially greedy if and only if $(\ref{g})$,
$(\ref{ag})$ are satisfied where $\sigma_m(x)$,
${\stackrel{\sim}\sigma}_m(x)$ are replaced by  suitable
weaker notions. Precisely, for \textbf{partially greedy bases} we
will prove that a basis is partially greedy if and only if
$(\ref{g})$, $(\ref{ag})$ are satisfied where $\sigma_m(x)$ and
${\stackrel{\sim}\sigma}_m(x)$ are replaced by the weaker notions
$\sigma_m^L(x)$ and ${\stackrel{\sim}\sigma}_m^L(x)$ respectively.

Motivated by these new characterizations of partially greedy bases
we introduce the notion of \textbf{reverse partially greedy basis}.
We say a basis is reverse partially greedy if it satisfies
$(\ref{ag})$ where ${\stackrel{\sim}\sigma}_m(x)$ is replaced by
${\stackrel{\sim}\sigma}_m^R(x)$. We will also  prove
characterizations of reverse partially greedy bases which are analogues
of the characterizations of partially greedy bases mentioned above
and proved in \cite{S}.

In section 3 we will study the characterizations of $1$-partially greedy
and $1$-reverse partially greedy bases. In the last section we will study the weak and branch versions of
the new characterizations of almost greedy bases.

\section{Characterizations of partially greedy, reverse partially
greedy and almost greedy bases} \label{sec: 2}

First we recall a  few results concerning quasi-greedy bases from
\cite{S}.
 In fact these results do not require the basis to be a Schauder basis  and are valid, more generally, for
biorthogonal systems $((e_n),(e_n^*))$ such that $a < \|e_n\|,
\|e_n^*\| < b$ for some positive  constants $a,b$, $(e_n)$ has dense
linear span in $X$, and the map $x \mapsto  (e_n^*(x))$  is
injective. We say that $(e_n)$ is a \textbf{bounded Markushevich
basis}.

Set  $\N^m:=\{A \subset \N:~|A|=m\}$ and $\N^{<
\infty}:=\cup_{m=0}^{\infty}\N^m$.
\begin{lem}
Suppose that $(e_n)$ is a quasi-greedy basis with constant $K$ and
$A\in \N^{< \infty}$. Then, for every choice of signs $\varepsilon_j
=\pm 1$, we have
\begin{align}\label{uc}
\frac{1}{2K}\|\sum_{j\in A}e_j\|\leq \|\sum_{j\in A} \varepsilon_j
e_j\|\leq 2K \|\sum_{j\in A} e_j\|,
\end{align}
and hence for any scalars $(a_i)_{i\in A}$,

\begin{align}\label{cuc}
\| \sum_{j\in A} a_je_j\|\leq 2K~ \underset{j\in
A}{max}|a_j|\|\sum_{j\in A} e_j\ \|.
\end{align}
\end{lem}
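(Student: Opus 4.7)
The plan is to split $A=A^+\cup A^-$ with $A^\pm=\{j\in A:\varepsilon_j=\pm1\}$, and to reduce each inequality in (\ref{uc}) to applications of quasi-greediness followed by a single triangle inequality.

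The first step bounds each half-sum $\|\sum_{j\in A^\pm}e_j\|$ by the two norms appearing in (\ref{uc}). For $x=\sum_{j\in A}e_j$ all nonzero coefficients equal $1$, so $A^+$ is a legitimate greedy selection of size $|A^+|$, and choosing $\Lambda_{|A^+|}(x)=A^+$ gives $G_{|A^+|}(x)=\sum_{j\in A^+}e_j$. Quasi-greediness then yields $\|\sum_{j\in A^+}e_j\|\le K\|\sum_{j\in A}e_j\|$, and the symmetric argument gives the same bound for $A^-$. For $y=\sum_{j\in A}\varepsilon_j e_j$ every nonzero coefficient has absolute value $1$, so again we may take $\Lambda_{|A^+|}(y)=A^+$; this time $G_{|A^+|}(y)=\sum_{j\in A^+}e_j$, yielding $\|\sum_{j\in A^+}e_j\|\le K\|y\|$, and $A^-$ is handled symmetrically. (One could alternatively perturb the coefficients on $A^-$ by a factor $1\pm\delta$ to break ties, invoke quasi-greediness for the perturbed vector, and let $\delta\to0^+$.)

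Combining the four half-sum bounds with the triangle inequality yields (\ref{uc}): from $\sum_{j\in A}\varepsilon_j e_j=\sum_{j\in A^+}e_j-\sum_{j\in A^-}e_j$ one obtains $\|\sum_{j\in A}\varepsilon_j e_j\|\le 2K\|\sum_{j\in A}e_j\|$, and from $\sum_{j\in A}e_j=\sum_{j\in A^+}e_j+\sum_{j\in A^-}e_j$ one obtains $\|\sum_{j\in A}e_j\|\le 2K\|\sum_{j\in A}\varepsilon_j e_j\|$.

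For (\ref{cuc}), set $M=\max_{j\in A}|a_j|$. The extreme points of the cube $[-1,1]^A$ are exactly the sign vectors in $\{\pm1\}^A$, so $(a_j/M)$ is a convex combination $\sum_l\lambda_l\varepsilon^{(l)}$ of sign vectors; inserting this decomposition and applying (\ref{uc}) to each term gives $\|\sum_{j\in A}a_j e_j\|\le 2KM\|\sum_{j\in A}e_j\|$. I do not anticipate any real obstacle; the only point calling for care is ensuring that $A^\pm$ can be chosen as a valid greedy set in the first step, which is precisely the freedom built into the definition of $\Lambda_m$ when the relevant coefficients have common absolute value.
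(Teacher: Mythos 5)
Your argument is correct. The paper itself gives no proof of this lemma (it is quoted from \cite{S}), and your proof---splitting $A$ into $A^+$ and $A^-$, applying quasi-greediness to $\sum_{j\in A}e_j$ and to $\sum_{j\in A}\varepsilon_je_j$ with the greedy sets $A^\pm$ (legitimate since all nonzero coefficients have equal modulus, with the $\delta$-perturbation available to break ties), and then deducing \eqref{cuc} by convexity over the extreme points of $[-1,1]^A$---is exactly the standard argument from the literature and yields the stated constant $2K$ on both sides.
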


\begin{lem}
Suppose that $(e_n)$ is a quasi-greedy basis with constant $K$ and
$x\in X$ has greedy ordering $\rho$. Then
\begin{align}\label{min}
|e_{\rho(m)}^*(x)| \left\|\sum_1^m e_{\rho(i)}\right\|\leq 4K^2 \| x
\|.
\end{align}
\end{lem}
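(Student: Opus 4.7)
The plan is to split the constant $4K^2$ as $(2K)\cdot(2K)$, with each factor coming from a separate appeal to quasi-greediness. Write $a_i=e_{\rho(i)}^*(x)$, $\varepsilon_i=\mathrm{sign}(a_i)$, $t=|a_m|$, and introduce the auxiliary vector $v:=t\sum_{i=1}^m \varepsilon_i e_{\rho(i)}$.

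The first step is an immediate application of \eqref{uc} with the sign choice $\varepsilon_i$:
\begin{equation*}
\left\|\sum_{i=1}^m e_{\rho(i)}\right\|\le 2K\left\|\sum_{i=1}^m \varepsilon_i e_{\rho(i)}\right\|,
\end{equation*}
and multiplying by $t$ reduces the lemma to the bound $\|v\|\le 2K\|x\|$.

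The second step bounds $\|v\|$ by expressing it as a signed combination of greedy projections of $x$. Since $|a_i|\ge t$, one has $v=\sum_{i=1}^m c_i a_i e_{\rho(i)}$ with $c_i:=t/|a_i|\in(0,1]$. Choosing the greedy ordering so that $|a_1|\ge|a_2|\ge\dots\ge|a_m|=t$, the scalars $(c_i)$ form a non-decreasing sequence with $c_m=1$. Using $a_i e_{\rho(i)}=G_i(x)-G_{i-1}(x)$ (with $G_0(x)=0$) and applying summation by parts yields
\begin{equation*}
v=G_m(x)-\sum_{i=1}^{m-1}(c_{i+1}-c_i)\,G_i(x).
\end{equation*}
Each $G_i(x)$ is a greedy approximation of $x$, so quasi-greediness gives $\|G_i(x)\|\le K\|x\|$; combined with the fact that the non-negative weights $(c_{i+1}-c_i)$ telescope to $c_m-c_1\le 1$, the triangle inequality produces
\begin{equation*}
\|v\|\le K\|x\|\bigl(1+(c_m-c_1)\bigr)\le 2K\|x\|.
\end{equation*}

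Chaining the two steps delivers $|e_{\rho(m)}^*(x)|\,\|\sum_{i=1}^m e_{\rho(i)}\|\le 4K^2\|x\|$. I expect the second step to be the main obstacle: the ``truncated'' vector $v$ is not itself of the form $G_k(\cdot)$ (it is, loosely, a truncation of $G_m(x)$ at level $t$), and the Abel summation representation as a telescoping combination of the $G_i(x)$ with total variation at most $1$ is precisely what allows the sequential hypothesis of quasi-greediness to yield a uniform estimate for this sort of truncation operator.
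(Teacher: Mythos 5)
Your proof is correct and follows essentially the same route as the argument behind this lemma, which the paper only recalls from \cite{S}: the factor $2K$ from the sign--unconditionality estimate \eqref{uc}, combined with the bound $\|v\|\le 2K\|x\|$ for the level-$t$ truncation of $G_m(x)$, obtained by Abel summation against the greedy partial sums $G_i(x)$ and the telescoping weights $c_{i+1}-c_i$. The only (harmless) omission is the degenerate case $e_{\rho(m)}^*(x)=0$, where the inequality holds trivially and the scalars $c_i=t/|a_i|$ need not be formed.
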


Recall that the fundamental function $\phi(n)$ of a basis $(e_n)$ is
defined by
\begin{align*}
\phi(n)=\underset{|A|\leq n}{sup}\|1_A\|.
\end{align*}
 Note that $\phi$ is a subadditive function, i.e., $\phi(m+n)\leq\phi(m)+\phi(n)$, and satisfies
$\phi(m) \ge(m/n)\phi(n)$ for all positive integers $m,n$ with $m\le n$.

If $(e_n)$ is an  almost greedy basis then from \eqref{min} it follows
that there exists a constant $C$ such that
\begin{align}\label{agmin}
\phi(|A|)min|a_i|\leq C\|\sum_{i \in A} a_i e_i\|
\end{align}
for all $A\in \N^{< \infty}$ and all scalars $a_i$, $i\in A$.

In \cite{S} it was proved that a basis $(e_n)$ of a Banach space $X$
is partially greedy if and only if it is quasi-greedy and
conservative. We now give another characterization of partially
greedy bases.
\begin{thm}\label{pg}
Suppose $(e_n)$ is a basis of a Banach space $X$. Then \TFAE \bla
\item  $(e_n)$ is partially greedy.
\item  $(e_n)$ is quasi-greedy and conservative.
\item  There exists a constant $C$ such that
\begin{align}\label{qc}
\| x-G_m(x)\|\leq C {\stackrel{\sim}\sigma}_m^L(x),~x\in X,~m\in\N.
\end{align}
\el
\end{thm}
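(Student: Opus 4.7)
Since the equivalence (a) $\Leftrightarrow$ (b) is already established in \cite{S}, I plan to close the cycle by proving (b) $\Rightarrow$ (c) $\Rightarrow$ (b).

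For (b) $\Rightarrow$ (c), fix $x \in X$, $m \in \N$, and a set $A$ with $|A| \leq m$ and $A < \alpha_m(x)$. Set $y := x - P_A(x)$. The first key observation is that since $A \cap \Lambda_m(x) = \emptyset$, the set $\Lambda_m(x)$ is a valid greedy selector for $y$ as well: the coefficients of $y$ agree with those of $x$ off $A$ and vanish on $A$, so the minimum of $|e_n^*(y)|$ over $\Lambda_m(x)$ equals the corresponding minimum for $x$, while the maximum over the complement can only decrease. Choosing $\Lambda_m(y) := \Lambda_m(x)$, one has $G_m(y) = G_m(x)$, so
\begin{align*}
\|x - G_m(x)\| \leq \|y - G_m(y)\| + \|P_A(x)\|.
\end{align*}
Quasi-greediness applied to $y$ yields $\|y - G_m(y)\| \leq (1+K)\|y\|$. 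For the second term, \eqref{cuc} gives $\|P_A(x)\| \leq 2K \max_{i \in A} |e_i^*(x)| \cdot \|1_A\|$. The condition $A \cap \Lambda_m(x) = \emptyset$ forces $\max_{i \in A} |e_i^*(x)| \leq |e_{\rho(m)}^*(x)|$, while conservativeness (applicable since $A < \Lambda_m(x)$ and $|A| \leq m$) gives $\|1_A\| \leq \Gamma_c \|1_{\Lambda_m(x)}\|$. Applying \eqref{min} to $y$ with a greedy ordering $\rho$ chosen so that $\{\rho(1),\ldots,\rho(m)\} = \Lambda_m(x)$, and noting that $|e_{\rho(m)}^*(y)| = |e_{\rho(m)}^*(x)|$, we obtain $|e_{\rho(m)}^*(x)| \cdot \|1_{\Lambda_m(x)}\| \leq 4K^2 \|y\|$. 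Chaining these estimates yields $\|P_A(x)\| \leq 8K^3 \Gamma_c \|y\|$, hence $\|x - G_m(x)\| \leq C \|x - P_A(x)\|$ with $C := (1+K) + 8K^3 \Gamma_c$; taking the infimum over admissible $A$ gives \eqref{qc}.

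For (c) $\Rightarrow$ (b), quasi-greediness is immediate by taking $A = \emptyset$ in the infimum defining ${\stackrel{\sim}\sigma}_m^L(x)$: then ${\stackrel{\sim}\sigma}_m^L(x) \leq \|x\|$, so \eqref{qc} yields $\|G_m(x)\| \leq (C+1)\|x\|$. To establish conservativeness, I would test \eqref{qc} on $x := 1_A + 1_B$ for a given pair $A < B$ in $\N^{<\infty}$ with $|A| \leq |B| = m$: every nonzero coefficient of $x$ has modulus one, so $B$ is a valid choice of $\Lambda_m(x)$, giving $\alpha_m(x) = \min B > \max A$, $G_m(x) = 1_B$, and ${\stackrel{\sim}\sigma}_m^L(x) \leq \|x - P_A(x)\| = \|1_B\|$. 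Hence $\|1_A\| = \|x - G_m(x)\| \leq C \|1_B\|$. Combined with (b) $\Rightarrow$ (a) from \cite{S}, the cycle closes.

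The main obstacle I anticipate is the packaging of estimates in (b) $\Rightarrow$ (c): one must verify that $\Lambda_m(x)$ serves simultaneously as a greedy selector for both $x$ and $y$, and then combine quasi-greediness, conservativeness, and \eqref{min} in such a way that $\|P_A(x)\|$ is bounded by $\|y\| = \|x - P_A(x)\|$ rather than merely by $\|x\|$; the weaker bound would reproduce only quasi-greediness and not the stronger inequality \eqref{qc}.
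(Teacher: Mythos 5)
Your proposal is correct and follows essentially the same route as the paper's proof: the same decomposition $x-G_m(x)=(y-G_m(y))+P_A(x)$ with $y=x-P_A(x)$ and $G_m(y)=G_m(x)$, the same chaining of \eqref{cuc}, conservativeness, and \eqref{min} to bound $\|P_A(x)\|$ by a multiple of $\|x-P_A(x)\|$, and the same test vectors for (c)$\Rightarrow$(b). The only cosmetic differences are that you apply \eqref{min} directly to $y$ (yielding the constant $8K^3\Gamma_c$ where the paper obtains $8K^4\Gamma_c$ by routing through $\|G_m(y)\|\le K\|y\|$), and that the paper tests conservativeness on $1_A+(1+\varepsilon)1_B$ and lets $\varepsilon\to 0$, thereby forcing the greedy set to be $B$ rather than relying, as you do, on the convention that \eqref{qc} holds for every admissible tie-breaking choice of $\Lambda_m(x)$.
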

\begin{proof}
Equivalence of $(a)$ and $(b)$ was observed in \cite{S}. Clearly
$(c)$ implies that $\|G_m(x)\|\le (1+C)\|x\|$ for all $m \ge 1$ and for all $x \in X$, and hence that  the basis is quasi-greedy. We now prove that
$(c)$ implies that the basis is conservative.

Let $A,B\in \N^{<\infty}$ with $|A|\leq |B|=m$, $A<B$ and $m\in \N$.
Consider $x={\sum}_{i\in A}e_i+(1+\varepsilon){\sum}_{i\in B}e_i$
for any $\varepsilon>0$. Then $G_m(x)=(1+\varepsilon){\sum}_{i\in
B}e_i$. Now from  $(c)$, we have $\| 1_A \|\leq C
{\stackrel{\sim}\sigma}_m^L(x)\leq C(1+\varepsilon)\| 1_B \|$. Thus,
by letting $\varepsilon\rightarrow 0$, we get $(e_n)$ is
conservative with constant $C$, and this gives $(c)$ implies $(a)$.

Now we will prove that $(b)$ implies $(c)$.
Let the  basis $(e_n)$  be quasi-greedy with constant $K$ and conservative
with constant $\Gamma_c$. Let $x\in X$, $m\in \N$ and
$G_m(x)=\underset{i\in \Lambda_m(x)}{\sum}e_i^*(x)e_i$.

Now choose any $A\subset \N$ with $|A|\leq m$ and $A<\al_m(x)$. Then
\begin{align*}
x-G_m(x)=x-\underset{i\in A}{\sum}e_i^*(x)e_i-\underset{i\in
\Lambda_m(x)}\sum e_i^*(x)e_i+\underset{i\in A}{\sum}e_i^*(x)e_i.
\end{align*}
We can write
\begin{align*}
\underset{i\in
\Lambda_m(x)}{\sum}e_i^*(x)e_i=G_m\left(x-\underset{i\in
A}{\sum}e_i^*(x)e_i\right),
\end{align*} and hence
$$\|
\underset{i\in
\Lambda_m(x)}{\sum}e_i^*(x)e_i\|\le  K\|x-\underset{i\in
A}{\sum}e_i^*(x)e_i\|.$$
From $(\ref{cuc})$, $(\ref{min})$ we have
\begin{align*}
\|\underset{i\in A} \sum e_i^*(x)e_i  \| &\leq 2K~\underset{i\in
A}{max}|e_i^*(x)|\| 1_A\|\\ &\leq 2K~\underset{i\in
\Lambda_m(x)}{min}|e_i^*(x)|\|1_A\|\\&\leq
8K^3\Gamma_c\|\underset{i\in \Lambda_m(x)}{\sum}e_i^*(x)e_i\|\\
& \le 8K^4\Gamma_c \|
x-\underset{i\in A}{\sum}e_i^*(x)e_i \|
      \qedhere
\end{align*}
Thus $\parallel x-G_m(x)\parallel\leq (1+K+8K^4\Gamma_c)\|
x-\underset{i\in A}{\sum}e_i^*(x)e_i \|$. This completes the proof.

\end{proof}

\bdfn A basis $(e_n)$ of a Banach space $X$ is said to be \textbf
{reverse conservative} if there exists some constant $\Gamma_r$ such
that
\begin{align*}
\| 1_A \| \leq \Gamma_r \|1_B\| \hspace{7mm} if \hspace{3mm} B<A~
and~ |A|\leq |B|.
\end{align*}
\edfn

\bdfn We say that a basis $(e_n)$ of a Banach space $X$ is \textbf
{reverse partially greedy} if there exists some constant $C$ such
that
\begin{align}\label{qr}
\parallel x-G_m(x)\parallel\leq C {\stackrel{\sim}\sigma}_m^R(x)
\end{align}
for all $x\in X$ and $m\in \N$. \edfn
\begin{ex}
We present an example of a reverse conservative basis which is not
democratic.
For each $n\in \mathbb{N}$, we define $F_n:=\{A\subset \N:|A|\leq n!,
~n!\leq A\}$ and $F:=\bigcup_{n\geq 1}F_n$.

Observe that the set $F$ is closed under spreading to the right: in fact, if
 $A,B\in\mathbb{ N}^m$ , $A\in F$ and $\min A\leq \min B$, then $B\in F$.

We define a norm on $c_{00}$ as follows
\begin{align*}
\|x\|=\max \{(|x|,1_A):A\in F\}
\end{align*}
for $x\in c_{00}$.
Let $X$ be the completion of $c_{00}$ in this norm. The canonical
basis $(e_n)$ of $X$  is  normalized and $1$-unconditional.

From the right spreading property of $F$, it follows that $\|x\|\leq
\|y\|$, where $y=\sum a_i e_{n_i}$ is a spread of $x=\sum a_ie_i$
with $n_1<n_2<\ldots$. In particular, if $A<B$ and $|A|\leq |B|$
then $\|1_A\|\leq \|1_B\|$. Thus $(e_n)$ is a $1$-conservative basis.
However, $\|1_{[1,n!]}\|=(n-1)!$, while $\|1_{[n!+1,2n!]}\|=n!$. So
$(e_n)$ is not a democratic basis.

Now consider the dual norm $\|\cdot\|^*$. Since $\|x\|\leq \|y\|$ it
follows easily that $\|x\|^*\geq\|y\|^*$. In particular,
$\|\cdot\|^*$ is reverse conservative. But $\|1_{[1,n!]}\|=(n-1)!$
implies $\|1_{[1,n!]}\|^*\geq n$ while $\|1_{[n!+1,2n!]}\|^*= 1$
since $[n!+1,2n!]\in F_n$. So the dual norm is not democratic.\\
\end{ex}
The proof of the following characterization of  reverse partially greedy bases
 is similar to the proof  of Theorem~\ref{pg}.
\begin{thm}
Let $(e_n)$ be a basis of Banach space $X$. Then \TFAE \bla
\item  $(e_n)$ is reverse partially greedy.
\item  $(e_n)$ is quasi-greedy and reverse conservative.

\el
\end{thm}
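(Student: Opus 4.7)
The plan is to follow the proof of Theorem~\ref{pg} step by step, with two systematic substitutions: the constraint ``$A<\al_m(x)$'' is replaced throughout by ``$A>\be_m(x)$'', and conservativity is replaced by reverse conservativity. The argument is symmetric under reversal of position, so no genuine obstacle is expected; the only points needing verification are that the test vectors and the positional hypotheses line up correctly on the ``right'' side.

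For $(a) \Rightarrow (b)$: Quasi-greediness is immediate from $\tilde\sigma_m^R(x) \le \|x\|$ (take $A=\es$), which gives $\|G_m(x)\| \le (1+C)\|x\|$. For reverse conservativity, given $B<A$ with $|A|\le|B|=m$, the appropriate test vector is
\begin{equation*}
x = (1+\varepsilon)\sum_{i\in B}e_i + \sum_{i\in A}e_i,
\end{equation*}
which is the ``reverse'' of the vector used in Theorem~\ref{pg}: now the large-coefficient set $B$ sits to the left, so $\La_m(x)=B$ and $\be_m(x)=\max B<\min A$, making $A$ admissible for $\tilde\sigma_m^R(x)$. Applying $(\ref{qr})$ and letting $\varepsilon \downarrow 0$ yields $\|1_A\| \le C \|1_B\|$, i.e. reverse conservativity with constant $C$.

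For $(b) \Rightarrow (a)$: Given $x$, $m$, and any admissible $A$ (so $|A|\le m$ and $A>\be_m(x)$), the key observation is that $A \cap \La_m(x) = \es$ and the coefficients of $x$ on $A$ are dominated by those on $\La_m(x)$, so $\La_m(x)$ is still a valid greedy selection for $x-P_A(x)$; in particular $G_m(x-P_A(x))=G_m(x)$. This produces the same decomposition
\begin{equation*}
x-G_m(x)=(x-P_A(x))-G_m(x-P_A(x))+P_A(x)
\end{equation*}
as in Theorem~\ref{pg}, and reduces the task to bounding $\|P_A(x)\|$ by a constant multiple of $\|x-P_A(x)\|$. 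One chains, exactly as in the earlier proof: $(\ref{cuc})$ to pass to $2K\max_{i\in A}|e_i^*(x)|\,\|1_A\|$; the greedy ordering to replace $\max_{i\in A}|e_i^*(x)|$ by $\min_{i\in \La_m(x)}|e_i^*(x)|$; reverse conservativity applied to $\La_m(x)<A$ with $|A|\le|\La_m(x)|$ to replace $\|1_A\|$ by $\Gamma_r\|1_{\La_m(x)}\|$; and finally $(\ref{min})$ together with $\|G_m(x-P_A(x))\|\le K\|x-P_A(x)\|$ to convert the whole estimate into $C\|x-P_A(x)\|$. Taking the infimum over admissible $A$ delivers $(a)$.

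The step that needs the most care is the verification that $G_m(x-P_A(x))=G_m(x)$; this requires $A$ to be both disjoint from $\La_m(x)$ and have coefficients no larger than those on $\La_m(x)$, which is precisely what the position condition $A > \be_m(x)$ guarantees (and is exactly why the weaker $\tilde\sigma_m^R$, rather than $\tilde\sigma_m$, appears in the hypothesis). Beyond that, the proof is pure bookkeeping parallel to Theorem~\ref{pg}.
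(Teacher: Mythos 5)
Your proposal is correct and is exactly the argument the paper intends: the paper omits the proof, stating only that it is "similar to the proof of Theorem~\ref{pg}", and your systematic replacement of $A<\alpha_m(x)$ by $A>\beta_m(x)$ and of conservativity by reverse conservativity (with the test vector $(1+\varepsilon)1_B+1_A$ for $B<A$, and reverse conservativity applied to $\Lambda_m(x)<A$ in the chain through \eqref{cuc} and \eqref{min}) is precisely that mirroring. No gaps; the key verification that $\Lambda_m(x)$ remains a valid greedy set for $x-P_A(x)$ is handled correctly.
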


We now prove that if a basis $(e_n)$ of a Banach space $X$ is both
conservative and reverse conservative  then it is democratic.
If $X$ is an infinite-dimensional Banach space then for given $A,B\in
\N^m$ we can find $C \in \N^m$ with $A<C$ and $B<C$. Now by the
conservative and reverse conservative properties of the basis we can
easily conclude that the basis is democratic. We now give another
proof of this fact which has the advantage of  working  for a \textit{finite} basis as well.

\begin{lem}\label{crd}
Let $(e_n)$ be a basis of Banach space $X$. If $(e_n)$ is both
conservative and reverse conservative then $(e_n)$ is democratic.
\end{lem}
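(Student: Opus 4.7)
My plan is to prove the democracy inequality $\|1_A\|\le\Gamma\|1_B\|$ (for $|A|\le|B|$) by interposing a reference set $C$ of cardinality $|B|$ and applying the sandwich $\|1_A\|\le \Gamma_c\|1_C\|\le \Gamma_c\Gamma_r\|1_B\|$. The whole content of the argument is in the choice of $C$, which must be placed so that conservativity relates it to $A$ and reverse conservativity relates it to $B$ (or vice versa), and the construction has to survive in the finite case where there may not be much room to the right of $A\cup B$.

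For an infinite basis, as the authors already observe just before the lemma, $C$ can be taken to consist of any $|B|$ indices lying strictly above $\max(A\cup B)$: then $A<C$ and $B<C$, and both hypotheses close up the sandwich with composite constant $\Gamma_c\Gamma_r$. In a finite basis of length $N$, the same right-placed choice works whenever $N-\max(A\cup B)\ge|B|$, and the symmetric left-placed choice $C=[1,|B|]$ works whenever $\min(A\cup B)>|B|$ (using reverse conservativity first and conservativity second). These two cases cover every configuration except the \emph{straddling} one, in which $A\cup B$ meets both the initial block $[1,|B|]$ and the terminal block $(N-|B|,N]$, so that there is no room for a reference set at either end.

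In the straddling case I would split $A=A_L\cup A_R$ at an interior threshold $t$ chosen by a discrete intermediate-value argument on the function $f(t)=|A\cap(t,N]|-|B\cap[1,t]|$ (which is non-increasing in unit or double steps from $|A|$ at $t=0$ to $-|B|$ at $t=N$), so that simultaneously $|A_L|\le|B_R|$ and $|A_R|\le|B_L|$, where $B_L=B\cap[1,t]$ and $B_R=B\cap(t,N]$. Then $A_L<B_R$ and $B_L<A_R$, and conservativity and reverse conservativity yield $\|1_{A_L}\|\le\Gamma_c\|1_{B_R}\|$ and $\|1_{A_R}\|\le\Gamma_r\|1_{B_L}\|$; a triangle-inequality combination completes the bound.

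The main obstacle is to reassemble these two half-bounds into an inequality whose right-hand side is $\|1_B\|$ itself rather than a sum $\|1_{B_L}\|+\|1_{B_R}\|$: in the absence of unconditionality, the two halves of $B$ need not be dominated by $\|1_B\|$. I would handle this by applying conservativity once more to the pair $B_L<B_R$ to make the two halves comparable, and by absorbing any residual scalar pieces through the bounded Markushevich property $\|e_n\|\le b^2\|1_B\|$ (from biorthogonality), arranging the bookkeeping so that the resulting democracy constant depends only on $\Gamma_c$, $\Gamma_r$, and the bounds $a,b$ of the Markushevich basis, and in particular not on $N$ or on the dimension of the space.
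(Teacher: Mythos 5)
Your splitting of $A$ and $B$ at a crossing threshold and the piecewise application of conservativity and reverse conservativity is essentially the paper's own argument: the paper locates the first $j$ with $a_{j+1}\ge b_{m-j}$, which produces exactly your decomposition $A_L<B_R$, $B_L<A_R$ (plus possibly a common singleton). However, the step you yourself identify as the main obstacle --- passing from $\Gamma_c\|1_{B_R}\|+\Gamma_r\|1_{B_L}\|$ to a multiple of $\|1_B\|$ --- is not resolved by your proposed fix. Applying conservativity (or reverse conservativity) to the pair $B_L<B_R$ only compares the two halves of $B$ \emph{to each other}; it never bounds either half by $\|1_B\|$, and the triangle inequality $\|1_B\|\le\|1_{B_L}\|+\|1_{B_R}\|$ runs in the wrong direction. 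The tool that actually closes this step is the Schauder basis constant: since $B_L=B\cap[1,t]$, one has $1_{B_L}=P_{[1,t]}(1_B)$, hence $\|1_{B_L}\|\le K_b\|1_B\|$ and $\|1_{B_R}\|\le(1+K_b)\|1_B\|$. This is precisely what the paper does, and it is why its democracy constant involves $K_b$. Your claim that the final constant depends only on $\Gamma_c,\Gamma_r$ and the Markushevich bounds $a,b$ is therefore unjustified: the estimate $\|e_n\|\le b^2\|1_B\|$ can absorb a bounded number of residual coordinates, not a half of $B$ whose size is comparable to $|B|$.

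A second, smaller gap: your intermediate-value function $f$ drops by $2$ exactly at indices of $A\cap B$, so when $|A|=|B|$ the target value $0$ may be skipped (for instance $A=\{1,2\}$, $B=\{2,3\}$, where $f$ jumps from $1$ to $-1$). In that case you must set the common crossing index aside as a singleton and split the remainder --- this is the paper's three-piece case $A=A_1\cup A_2\cup A_3$ --- and the leftover singleton is then absorbed by the $\|e_n\|\le b^2\|1_B\|$ estimate you mention, or again by $K_b$. With these two repairs your argument coincides with the paper's proof; the preliminary case analysis with reference sets placed entirely to the left or right of $A\cup B$ is correct but unnecessary, since the splitting argument already covers those configurations.
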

\begin{proof}
Let there exist a constant $\Gamma$ such that for any two sets
$A,B\in \N^{< \infty}$, with $|A|\leq |B|$, we have
\begin{align*}
\| 1_A \| \leq \Gamma \|1_B\| \hspace{7mm} if \hspace{3mm} A<B
\hspace{3mm} or \hspace{3mm}A> B.
\end{align*}

Choose any two sets  $A,B\in \N^m$.  Let $A=\{a_1<a_2<\ldots <a_m\}$
and $B=\{b_1<b_2<\ldots <b_m\}$.
If $A <  B$ or $B< A$, then
$\|1_A\| \leq \Gamma \|1_B\|$ and $\|1_B\| \leq \Gamma\|1_A\|$.
So, for the rest of the proof we assume that
this is not the case.

Hence  $a_1\leq b_m$. If $a_1=b_m$ then we can write $A=A_1\cup
A_2$, $B=B_1\cup B_2$ where $A_1=\{a_1\}$, $A_2=\{a_2,\ldots,a_m\}$,
$B_1=\{b_m\}$, $B_2=\{b_1,\ldots,b_{m-1}\}$ and $B_2<A_2$.

Let $K_b$ be the basis constant for $(e_n)$. Then
\begin{align*}
\|1_A\|\leq \|1_{A_1}\| + \|1_{A_2}\|\leq
\|1_{B_1}\|+\Gamma\|1_{B_2}\|\leq (K_b(1+\Gamma)+1) \|1_B\|
\end{align*}
and
\begin{align*}
\|1_B\|\leq \|1_{B_1}\| + \|1_{B_2}\|\leq
\|1_{A_1}\|+\Gamma\|1_{A_2}\|\leq (K_b(1+\Gamma)+\Gamma) \|1_A\|.
\end{align*}

Now consider the case $a_1<b_m$. If we compare $a_2$ with $b_{m-1}$
then there can be three possibilities: $a_2<b_{m-1}$, $a_2>b_{m-1}$
or $a_2=b_{m-1}$. If $a_2>b_{m-1}$ or $a_2=b_{m-1}$ then we will
stop the process; otherwise we will continue in the same manner. By
the assumptions on the sets $A,B$ we can find the first $j$, $1\leq
j <  m$, such that either $a_{j+1}>b_{m-j}$ or $a_{j+1}=b_{m-j}$.

Thus we can write \textit{ either}

$A=A_1\cup A_2$ and $B=B_1\cup B_2$ where $|A_i|=|B_i|$, $A_1<B_1$,
$A_2>B_2$,  $A_1=\{a_1<\ldots<a_j\}<B_1=\{b_{m-j+1}<\ldots<b_m\}$
and $A_2=\{a_{j+1}<\ldots<a_m\}>B_2=\{b_{1}<\ldots<b_{m-j}\}$

\textit{or}

 $A=A_1\cup A_2\cup A_3$, $B=B_1\cup  B_2 \cup B_3$  where
$A_1=\{a_1<\ldots<a_j\}<B_1=\{b_{m-j+1}<\ldots<b_m\}$,
$A_2=\{a_{j+1}\}=\{b_{m-j}\}=B_2$ and
$A_3=\{a_{j+2}<\ldots<a_m\}>B_2=\{b_{1}<\ldots<b_{m-j-1}\}.$

For the first case we have
\begin{align*}
\| 1_{A_i} \| \leq \Gamma \|1_{B_i}\| ~\text{and}~ \| 1_{B_i} \| \leq
\Gamma \|1_{A_i}\| \hspace{7mm} for \hspace{3mm} i=1,2.
\end{align*}

Now we can write
\begin{align*}
\|1_A\|\leq \|1_{A_1}\| + \|1_{A_2}\|\leq
\Gamma(\|1_{B_1}\|+\|1_{B_2}\|)\leq \Gamma (2K_b+1) \|1_B\|
\end{align*}
and
\begin{align*}
\|1_B\|\leq \|1_{B_1}\| + \|1_{B_2}\|\leq
\Gamma(\|1_{A_1}\|+\|1_{A_2}\|)\leq \Gamma (2K_b+1) \|1_A\|.
\end{align*}

 For the second case we get
\begin{align*}
\|1_A\|&\leq \|1_{A_1}\| + \|1_{A_2}\| +\|1_{A_3}\|\\&\leq
\Gamma(\|1_{B_1}\|+\|1_{B_3}\|)+\|1_{B_2}\|\\ &\leq (\Gamma
(2K_b+1)+2K_b )\|1_B\|
\end{align*}
and
 \begin{align*}
\|1_B\|&\leq \|1_{B_1}\| + \|1_{B_2}\| +\|1_{B_3}\|\\&\leq
\Gamma(\|1_{A_1}\|+\|1_{A_3}\|)+\|1_{A_2}\|\\&\leq (\Gamma
(2K_b+1)+2K_b) \|1_A\|.
\end{align*}
\end{proof}

Next, we consider  basis conditions that are formally  stronger than  (\ref{qc}) and
(\ref{qr}). We will say that a basis $(e_n)$ satisfies

\bla
\item {\textbf{\em property $(*)$}} if there exists a constant $C$ such that
\begin{align*}
\|x-G_m(x)\| \leq ~C  \sigma_m^L(x),~x\in X, ~m\in \N
\end{align*}

\item {\textbf{\em  property $(**)$}} if there exists a constant $C$ such that
\begin{align*}
\|x-G_m(x)\| \leq ~C \sigma_m^R(x),~x\in X, ~m\in \N.
\end{align*}
\el

While properties $(*)$ and $(**)$ appear to be stronger than $(\ref{qc})$
and $(\ref{qr})$ respectively, the following results prove that this is not the case.

\begin{thm}\label{gpg}
Let $(e_n)$ be a basis of a Banach space $X$. Then \TFAE \bla
\item $(e_n)$ is partially greedy.
\item $(e_n)$ satisfies property $(*)$.
\el
\end{thm}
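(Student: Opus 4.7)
For $(b) \Rightarrow (a)$: specializing $a_i = e_i^*(x)$ in the definition of $\sigma_m^L(x)$ yields $\|x - P_A(x)\|$ for any admissible $A$ of size $m$ (and padding smaller sets to size $m$ with zero coefficients handles the shorter witnesses appearing in $\tilde{\sigma}_m^L$). Hence property $(*)$ implies condition $(c)$ of Theorem~\ref{pg}, and $(e_n)$ is partially greedy.

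For $(a) \Rightarrow (b)$, by Theorem~\ref{pg} I may assume $(e_n)$ is quasi-greedy with constant $K$ and conservative with constant $\Gamma_c$. Fix $x \in X$, $m \in \N$, and an admissible witness $(A,(a_i))$ with $|A|=m$ and $A<\alpha_m(x)$; set $y := x - \sum_{i\in A} a_i e_i$, with the aim of proving $\|x - G_m(x)\| \le C\|y\|$ for some $C = C(K,\Gamma_c)$. Since $\Lambda_m(x) \cap A = \emptyset$, one has $e_i^*(y) = e_i^*(x)$ on $\Lambda_m(x)$. The central idea is to split
\[
A_1 := \{i \in A : |e_i^*(y)| \le |e_{\rho(m)}^*(x)|\}, \qquad A_2 := A \setminus A_1, \qquad k := |A_2|.
\]
Then the top $m+k$ coefficient positions of $y$ are precisely $\Lambda_m(x) \cup A_2$, with minimum absolute value $|e_{\rho(m)}^*(x)|$, so $(\ref{min})$ applied to $y$ at level $m+k$ yields $|e_{\rho(m)}^*(x)|\,\|1_{\Lambda_m(x) \cup A_2}\| \le 4K^2\|y\|$. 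Moreover $G_{m+k}(y) = G_m(x) + P_{A_2}(y)$, and substituting $P_{A_2}(y) = P_{A_2}(x) - \sum_{i \in A_2} a_i e_i$ produces the identity
\[
y - G_{m+k}(y) = x - G_m(x) - P_{A_2}(x) - \sum_{i \in A_1} a_i e_i,
\]
which combined with the quasi-greedy bound $\|y - G_{m+k}(y)\| \le (1+K)\|y\|$ gives $\|x - G_m(x)\| \le (1+K)\|y\| + \|u\|$, where $u := P_{A_2}(x) + \sum_{i \in A_1} a_i e_i$.

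To bound $\|u\|$, note that $u$ is supported on $A$; on $A_2$ the coefficients $e_i^*(x)$ satisfy $|e_i^*(x)| \le |e_{\rho(m)}^*(x)|$ (since $i \notin \Lambda_m(x)$), and on $A_1$ the relation $a_i = e_i^*(x) - e_i^*(y)$ combined with the defining bound of $A_1$ gives $|a_i| \le 2|e_{\rho(m)}^*(x)|$. Hence $\max_{i \in A}|u_i| \le 2|e_{\rho(m)}^*(x)|$. Chaining $(\ref{cuc})$, the conservativity bound $\|1_A\| \le \Gamma_c \|1_{\Lambda_m(x)}\|$, the inequality $\|1_{\Lambda_m(x)}\| \le 2K\|1_{\Lambda_m(x) \cup A_2}\|$ (another application of $(\ref{cuc})$), and the displayed consequence of $(\ref{min})$, one obtains $\|u\| \le 32K^4 \Gamma_c \|y\|$, so $\|x - G_m(x)\| \le (1 + K + 32K^4\Gamma_c)\|y\|$; taking the infimum over witnesses yields property $(*)$. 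The main obstacle is the arbitrariness of the scalars $a_i$: the $A_1/A_2$ split is what resolves this, absorbing the large $A_2$-portion into the greedy sum of $y$ at the enlarged level $m+k$, while on $A_1$ the coefficients are automatically dominated by $|e_{\rho(m)}^*(x)|$, which $(\ref{min})$ controls in terms of $\|y\|$.
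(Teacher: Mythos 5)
Your proof is correct, and the $(a)\Rightarrow(b)$ direction takes a genuinely different route from the paper's. The paper splits $A$ into the indices where $|e_i^*(y)|$ exceeds $|e_{\rho(m)}^*(x)|$ and the rest, identifies the first piece of $P_A(y)$ as a greedy sum of the \emph{truncation} $\sum_{i<\alpha_m(x)}e_i^*(y)e_i$, bounds the second piece via \eqref{cuc}, \eqref{min} and conservativity applied to the \emph{tail} $\sum_{i\ge \alpha_m(x)}e_i^*(y)e_i$, and then assembles separate estimates for $\|G_m(x)\|$, $\|P_A(x)\|$ and $\|x\|$; every one of these steps invokes the basis constant $K_b$, which is why the paper must assume a Schauder basis. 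You instead enlarge the greedy set of $y$ to $\Lambda_m(x)\cup A_2$, which absorbs the large part of the perturbation into a single greedy sum $G_{m+k}(y)$ of $y$ itself, and then exploit the exact identity $x-G_m(x)=(y-G_{m+k}(y))+u$ with $u$ supported on $A$ and $\max_{i\in A}|u_i|\le 2|e_{\rho(m)}^*(x)|$; the remainder is controlled by the same toolkit (\eqref{cuc}, \eqref{min}, conservativity). The payoff is that $K_b$ never appears, so your argument extends verbatim to bounded Markushevich bases, a generality the paper explicitly says its own proof does not reach; the paper's version, on the other hand, yields the slightly stronger conclusion $\|x\|\le C_1\|y\|$ recorded in its subsequent remark, which your identity does not directly give. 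One shared caveat for $(b)\Rightarrow(a)$: your padding of a witness $A$ with $|A|<m$ to size $m$ requires $\alpha_m(x)>m$, and when $\alpha_m(x)\le m$ the constraint set defining $\sigma_m^L(x)$ is empty, so property $(*)$ is vacuous there while \eqref{qc} is not; this degenerate case is equally unaddressed in the paper's one-line reduction to Theorem~\ref{pg}, so it is not a defect peculiar to your write-up, but it would be worth a sentence (e.g., handling quasi-greediness in that case separately).
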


\begin{proof}
Theorem~\ref{pg} shows that $(b)$ implies $(a)$.

We now prove that $(a)$ implies $(b)$. Let $K_b,K,\Gamma_c$ be the
basis constant, quasi-greedy constant and conservative constant
respectively. Let $x\in X$ and $A\subseteq \{1,\ldots,\al_m(x)-1\}$ with $|A|\leq
m$. Consider $y=\sum e_i^*(y)e_i$ where $e_i^*(y)=e_i^*(x)$ for all
$i\not \in A$. To prove the theorem it is sufficient to show that
\begin{equation} \label{eq: y_normestimate}
 \|x - G_m(x)\| \le C\|y\| ,\end{equation}
where $C$ depends only on $K,K_b$ and $\Gamma_c$.
We can write $A=A_1 \cup A_2$, where
\begin{align*}
A_1=\{i\in A:|e_i^*(y)|>|e_{\rho(m)}^*(x)|\}
\end{align*}
and $A_2=A\setminus A_1$.
Then $$\|\sum_{i\in A_1}e_i^*(y)e_i\|=\|G_{|A_1|}(\sum_{i=1}^{\alpha_m(x)-1}e^*_i(y)e_i)\|\leq KK_b\|y\|.$$
Also from $(\ref{cuc})$ and $(\ref{min})$ we have
\begin{align*}
\|\underset{i\in A_2}{\sum}e_i^*(y) e_i\| & \leq 2K~\underset{i\in A_2}{max}|e_i^*(y)| \|1_{A_2}\|\\
&\leq 2K~\underset{i\in \Lambda_m(x)}{min}|e_i^*(x)|\|1_{A_2}\| \\
&\leq 8K^3\Gamma_c\| \underset{i\in \Lambda_m(x)}{\sum}e_i^*(x)e_i\|\\
&= 8K^3\Gamma_c\| G_m(\sum_{\al_m(x)}^{\iy}e_i^*(x)e_i)\|\\
&\leq 8K^4\Gamma_c\|\sum_{\al_m(x)}^{\iy}e_i^*(x)e_i\|\\
&=8K^4\Gamma_c\|\sum_{\al_m(x)}^{\iy}e_i^*(y)e_i\|\\
&\leq 8K^4\Gamma_c(K_b+1) \|y\|.
\end{align*}
Thus
\begin{align*}
\|\underset{i\in A}{\sum}e_i^*(y) e_i\| \leq
(KK_b+8K^4\Gamma_c(K_b+1))\|y\|.
\end{align*}

Now we will find estimates of  $\|G_m(x)\|$, $\|\underset{i\in
A}{\sum}e_i^*(x) e_i\|$ and $\|x\|$ in terms of $\|y\|$.
\begin{align*}
\|G_m(x)\|&=\|G_m(\sum_{\al_m(x)}^{\iy}e_i^*(x)e_i)\|\\&  \leq K
\|\sum_{\al_m(x)}^{\iy}e_i^*(x)e_i \|\\&=K
\|\sum_{\al_m(x)}^{\iy}e_i^*(y)e_i \|\\& \leq K(K_b+1)\|y\|.
\end{align*}

From $(\ref{cuc})$ and $(\ref{min})$ we can write
\begin{align*}
\|\underset{i\in A}{\sum}e_i^*(x)e_i\|  &\leq 2K~\underset{i\in A}{max}|e_i^*(x)| \|1_{A}\|\\
&\leq 2K~\underset{i\in \Lambda_m(x)}{min}|e_i^*(x)|\|1_{A}\| \\
&\leq 8K^3\Gamma_c\| \underset{i\in \Lambda_m(x)}{\sum}e_i^*(x)e_i\|\\
&=8K^3\Gamma_c \|G_m(\sum_{\al_m(x)}^{\iy}e_i^*(x)e_i)\|\\
&\leq 8K^4\Gamma_c\|\sum_{\al_m(x)}^{\iy}e_i^*(x)e_i\|\\
&= 8K^4\Gamma_c\|\sum_{\al_m(x)}^{\iy}e_i^*(y)e_i\|\\
&\leq 8K^4\Gamma_c(K_b+1) \|y\|.
\end{align*}
Thus
\begin{align*}
 \|x\|= &\|y- \underset{i\in A}{\sum}e_i^*(y)e_i+\underset{i\in A}{\sum}e_i^*(x)e_i\|\\
                               &\leq \|y\|+\|\underset{i\in A}{\sum}e_i^*(y)e_i\|+\|\underset{i\in A}{\sum}e_i^*(x)e_i\|\\
                               &\leq(KK_b+16K^4\Gamma_c(K_b+1)+1) \|y\|.
\end{align*}

From these estimates we have
\begin{align*}
\|x-G_m(x)\|& \leq\| x\|+\|G_m(x) \|\\
&\leq (KK_b+16K^4\Gamma_c(K_b+1)+K(K_b+1)+1)\|y\|.
\end{align*}
\end{proof}

\Rem The proof  of Theorem~\ref{gpg} shows that for a partially greedy
basis $(e_n)$ we have $\|x\|\leq C_1\|y\|$ for some constant $C_1$,
which is stronger than \eqref{eq: y_normestimate}.
\end{rem}

Similar proofs give the following characterizations of  reverse
partially greedy bases and almost greedy bases.
\begin{thm}
Let $(e_n)$ be a basis of a Banach space $X$. Then \TFAE \bla
\item $(e_n)$ is quasi-greedy and  reverse conservative.
\item $(e_n)$ satisfies property $(**)$.
\el
\end{thm}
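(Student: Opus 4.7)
The plan is to mirror the proof of Theorem~\ref{gpg}, interchanging the roles of the two ends of $\Lambda_m(x)$ and replacing the conservative property by the reverse conservative property throughout. Since $\sigma_m^R(x)\le {\stackrel{\sim}\sigma}_m^R(x)$ trivially, property $(**)$ is formally stronger than reverse partial greediness, so the previous theorem already delivers $(b)\Rightarrow(a)$. A direct check is also available: $(**)$ forces $\|G_m(x)\|\le(1+C)\|x\|$ (hence quasi-greediness), while testing on $x=1_A+(1+\varepsilon)1_B$ with $B<A$ and $|A|\le|B|=m$ gives $G_m(x)=(1+\varepsilon)1_B$, $\beta_m(x)=\max B$, and $A>\beta_m(x)$, so that $\|1_A\|\le C\sigma_m^R(x)\le C(1+\varepsilon)\|1_B\|$, yielding reverse conservativity as $\varepsilon\to 0$.

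For $(a)\Rightarrow(b)$, the goal is to show $\|x-G_m(x)\|\le C\|y\|$ for every $y$ agreeing with $x$ off a fixed set $A\subseteq\{\beta_m(x)+1,\beta_m(x)+2,\dots\}$ with $|A|\le m$. Following Theorem~\ref{gpg}, I partition $A=A_1\cup A_2$ with $A_1=\{i\in A:|e_i^*(y)|>|e_{\rho(m)}^*(x)|\}$. The $A_1$-part is handled by applying $G_{|A_1|}$ to the right-tail vector $z=(I-S_{\beta_m(x)})(y)$, which satisfies $\|z\|\le(1+K_b)\|y\|$; on the support of $z$ the coordinates exceeding $|e_{\rho(m)}^*(x)|$ in absolute value are exactly those in $A_1$, since $i>\beta_m(x)$ with $i\notin A$ forces $i\notin\Lambda_m(x)$ and hence $|e_i^*(x)|\le|e_{\rho(m)}^*(x)|$. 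Thus $G_{|A_1|}(z)=\sum_{i\in A_1}e_i^*(y)e_i$, and quasi-greediness yields $\|\sum_{i\in A_1}e_i^*(y)e_i\|\le K(1+K_b)\|y\|$.

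The $A_2$-part, together with the analogous estimate for $\sum_{i\in A}e_i^*(x)e_i$, is controlled by $(\ref{cuc})$ and $(\ref{min})$ exactly as in Theorem~\ref{gpg}, once reverse conservativity is invoked: from $\Lambda_m(x)<A_2$ and $|A_2|\le m=|\Lambda_m(x)|$ we obtain $\|1_{A_2}\|\le\Gamma_r\|1_{\Lambda_m(x)}\|$. The one wrinkle relative to the partially greedy case is that the natural truncation is now an initial partial sum $S_{\beta_m(x)}$ rather than a tail; to get $\|y\|$ and not $\|x\|$ on the right-hand side I exploit the identity $S_{\beta_m(x)}(x)=S_{\beta_m(x)}(y)$ (valid because $A>\beta_m(x)$) together with the fact that the $m$ largest coefficients of $S_{\beta_m(x)}(x)$ coincide with $\{e_{\rho(i)}^*(x)\}_{i=1}^m$. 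Thus $(\ref{min})$ applied to $S_{\beta_m(x)}(y)$ gives $|e_{\rho(m)}^*(x)|\|1_{\Lambda_m(x)}\|\le 4K^2K_b\|y\|$, and likewise $\|G_m(x)\|\le K\|S_{\beta_m(x)}(y)\|\le KK_b\|y\|$. Combining these estimates via the triangle inequality $\|x-G_m(x)\|\le\|x\|+\|G_m(x)\|$ produces a constant depending only on $K$, $K_b$, and $\Gamma_r$. The main obstacle is really just this bookkeeping: recognizing that the tail-projection trick of Theorem~\ref{gpg} must be replaced by an initial-segment projection, and that the hypothesis $A>\beta_m(x)$ is precisely what makes the substitution legitimate.
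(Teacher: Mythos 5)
Your proposal is correct and is essentially the argument the paper intends: the paper omits the proof, stating only that it is "similar" to Theorems~\ref{pg} and~\ref{gpg}, and you carry out exactly that adaptation (conservative replaced by reverse conservative, the initial-segment projection replaced by the tail projection past $\be_m(x)$, using that $x$ and $y$ agree on $[1,\be_m(x)]$). The bookkeeping in your $A_1$, $A_2$, $G_m(x)$ and $\|x\|$ estimates matches the paper's scheme and yields a constant depending only on $K$, $K_b$, $\Gamma_r$, so nothing further is needed.
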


\begin{thm}\label{gag}
Let $(e_n)$ be a basis of a Banach space $X$. Then \TFAE \bla
\item $(e_n)$ is almost greedy.
\item There exists a constant $C$ such that for any $x\in X$,
$A\subset\{1,\ldots, \al_m(x)-1\}\cup \{\be_m(x)+1,\ldots\}, ~|A|\leq m$
and $a_i\in \mathbb{R}$, $i\in A$, we have
\begin{align*}
\|x-G_m(x)\| \leq ~C   \|x-\underset{i\in A}{\sum}a_ie_i\|.
\end{align*} \end{list}
\end{thm}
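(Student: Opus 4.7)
The theorem is stated in the paper as arising from ``similar'' proofs; my plan is to flesh out this similarity by combining the one-sided strategies of Theorems~\ref{pg} and~\ref{gpg}, using the fact that an almost greedy basis is both quasi-greedy and democratic. The two-sided restriction on $A$ forces two structural changes: democracy must replace conservativity everywhere, and the one-sided prefix trick of Theorem~\ref{gpg} must be replaced by a ``middle slab'' identity.

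For $(b)\Rightarrow(a)$, taking $A=\emptyset$ in (b) yields quasi-greediness with constant $1+C$. To obtain democracy, I run the test-vector argument from the proof of Theorem~\ref{pg} on both sides: for $A<B$ with $|A|\le|B|=m$, the vector $x=1_A+(1+\varepsilon)1_B$ has $\Lambda_m(x)=B$ and $A\subset\{1,\ldots,\alpha_m(x)-1\}$, so (b) applied with $a_i\equiv 1$ gives, as $\varepsilon\to 0$, the conservative inequality $\|1_A\|\le C\|1_B\|$; the mirror construction $x=(1+\varepsilon)1_B+1_A$ with $B<A$ and $A\subset\{\beta_m(x)+1,\ldots\}$ gives reverse conservativity. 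Lemma~\ref{crd} then supplies democracy, and quasi-greediness plus democracy is almost greediness.

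For $(a)\Rightarrow(b)$, let $K$ and $\Gamma$ be the quasi-greedy and democratic constants and set $y:=x-\sum_{i\in A}a_ie_i$; because $A\cap\Lambda_m(x)=\emptyset$, $y$ agrees with $x$ on $\Lambda_m(x)$. Following Theorem~\ref{gpg}, it suffices to bound $\|x\|$ and $\|G_m(x)\|$ separately by multiples of $\|y\|$. The critical new ingredient is the identity $G_m(x)=G_m(z)$ for the middle slab $z:=\sum_{i=\alpha_m(x)}^{\beta_m(x)}e_i^*(y)e_i$: on $[\alpha_m(x),\beta_m(x)]\setminus\Lambda_m(x)$ the hypothesis on $A$ forces $i\notin A$, so $|e_i^*(y)|=|e_i^*(x)|\le c:=|e^*_{\rho(m)}(x)|$, and the greedy ordering of $z$ still selects $\Lambda_m(x)$. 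Since $z$ is a difference of two partial sums of $y$, quasi-greediness then gives $\|G_m(x)\|\le 2KK_b\|y\|$.

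With $\|G_m(x)\|$ under control, \eqref{min} yields $c\|1_{\Lambda_m(x)}\|\le 4K^2\|G_m(x)\|$, and then \eqref{cuc} combined with democracy ($\|1_A\|\le\Gamma\|1_{\Lambda_m(x)}\|$) and $|e_i^*(x)|\le c$ on $A$ controls $\|\sum_{i\in A}e_i^*(x)e_i\|$ by a multiple of $\|y\|$. For $\|\sum_{i\in A}e_i^*(y)e_i\|$, I split $A=A_1\cup A_2$ according to whether $|e_i^*(y)|>c$: the $A_2$ piece is handled by the same democracy argument, while $A_1$ is further decomposed into $A_1^L:=A_1\cap\{i<\alpha_m(x)\}$ and $A_1^R:=A_1\cap\{i>\beta_m(x)\}$, and each corresponding sum is identified with $G_{|A_1^L|}$ applied to the left prefix of $y$ and $G_{|A_1^R|}$ applied to its right tail, bounded by $KK_b\|y\|$ and $K(1+K_b)\|y\|$ respectively. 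The triangle inequalities $\|x\|\le\|y\|+\|\sum_Ae_i^*(x)e_i\|+\|\sum_Ae_i^*(y)e_i\|$ and $\|x-G_m(x)\|\le\|x\|+\|G_m(x)\|$ then complete the estimate. The main obstacle is precisely the two-sided geometry of $A$: the single-prefix trick of Theorem~\ref{gpg} no longer captures all of $A$, and the middle-slab identity for $G_m(x)$ is the essential replacement that allows democracy to take the place of (reverse) conservativity throughout.
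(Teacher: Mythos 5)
Your proposal is correct and is essentially the paper's intended argument: the paper only remarks that Theorem~\ref{gag} follows by a proof ``similar'' to Theorems~\ref{pg} and~\ref{gpg}, and your write-up is exactly that adaptation, with democracy (via the disjointness $A\cap\Lambda_m(x)=\emptyset$ and $|A|\le m=|\Lambda_m(x)|$) replacing the conservative property, and the middle-slab identity $G_m(x)=G_m\bigl(\sum_{i=\alpha_m(x)}^{\beta_m(x)}e_i^*(y)e_i\bigr)$ correctly replacing the one-sided tail identity to handle the part of $A$ beyond $\beta_m(x)$. All the individual estimates (the $A_1^L$/$A_1^R$ prefix/tail greedy identifications, the use of \eqref{min} applied to $G_m(x)$, and the test vectors $1_A+(1+\varepsilon)1_B$ together with Lemma~\ref{crd} for the converse) check out.
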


Note that  $(e_n)$ is required to be a Schauder basis in the proofs
of the above results  as the basis constant $K_b$ appears in certain
estimates. However, by replacing $(b)$ in Theorem~\ref{gag} by a
stronger condition we can get a characterization of almost greedy
bounded Markushevic bases (see the first paragraph of
Section~\ref{sec: 2}).
\begin{thm} \label{thm: agcharacterization}
Let $(e_n)$ be a bounded  Markushevich basis for a  Banach space $X$. Then \TFAE \bla
\item $(e_n)$ is almost greedy.
\item Let $0 \le \lambda<1$. Then there exists a constant $C$ such that for any $x\in X$,
$A\subset \mathbb{N}$ with $|A|\leq m$ and  $|A\cap \Lambda_m(x)|\leq \lambda m$,
and any  $a_i\in \mathbb{R}$, $i\in A$, we have
\begin{align*}
\|x-G_m(x)\| \leq ~C   \|x-\underset{i\in A}{\sum}a_ie_i\|.
\end{align*}
\el
\end{thm}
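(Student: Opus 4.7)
The substantive direction is $(a) \Rightarrow (b)$; I adapt the proof of Theorem~\ref{gpg}, replacing the use of the Schauder basis constant $K_b$ (which is unavailable for a bounded Markushevich basis) by a threshold trick that is justified by the overlap hypothesis $|A \cap \Lambda_m(x)| \leq \lambda m$.

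Assume $(e_n)$ is almost greedy, hence quasi-greedy with constant $K$ and democratic with constant $\Gamma$ by \cite{S}. Fix $x$, $m$, $A$ and scalars $(a_i)_{i\in A}$ as in~(b) and set
\[
y := x - \sum_{i \in A} a_i e_i, \quad B := \Lambda_m(x) \setminus A, \quad k := |B| \geq (1-\lambda) m, \quad t := \min_{i \in \Lambda_m(x)} |e_i^*(x)|.
\]
The crucial observation is that for each $i \notin A \cup \Lambda_m(x)$ we have $e_i^*(y)=e_i^*(x)$ and $|e_i^*(x)|\leq t$, so $\{i : |e_i^*(y)| > t\} \subseteq A \cup \Lambda_m(x)$. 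Since $|e_i^*(y)|\geq t$ on $B$ and $|B| = k$, the $k$th greedy coefficient of $y$ has modulus at least $t$, so (\ref{min}) applied to $y$ gives $t\,\|1_{\Lambda_k(y)}\| \leq 4K^2\|y\|$. Combining with democracy ($\phi(k) \leq \Gamma\|1_{\Lambda_k(y)}\|$), subadditivity ($\phi(2m) \leq 2\phi(m)$) and $\phi(m) \leq (m/k)\phi(k) \leq (1-\lambda)^{-1}\phi(k)$, I obtain $t\,\phi(2m) \leq (8K^2 \Gamma/(1-\lambda))\|y\|$.

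Next, using $e_i^*(x)=e_i^*(y)$ for $i \notin A$, decompose
\[
x - G_m(x) \;=\; \sum_{i \in A \setminus \Lambda_m(x)} e_i^*(x) e_i \;+\; \bigl(y - P_{A \cup \Lambda_m(x)}(y)\bigr).
\]
The first sum has coefficients bounded by $t$ on a set of cardinality $\leq m$, so (\ref{cuc}) bounds it by $2Kt\phi(m)$. For the projection, write $\Lambda' := A \cup \Lambda_m(x)$ and split $\Lambda' = \Lambda'_1 \cup \Lambda'_2$ at the threshold $t$, where $\Lambda'_1 := \{i \in \Lambda' : |e_i^*(y)| > t\}$. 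By the crucial observation, $\Lambda'_1 = \{i \in \N : |e_i^*(y)| > t\}$, so $\sum_{i \in \Lambda'_1} e_i^*(y) e_i$ is a genuine greedy approximation $G_{|\Lambda'_1|}(y)$ and has norm $\leq K\|y\|$ by quasi-greediness; on $\Lambda'_2$ the coefficients are $\leq t$ and $|\Lambda'_2| \leq 2m$, so (\ref{cuc}) bounds the complementary sum by $2Kt\phi(2m)$. Putting the estimates together delivers $\|x - G_m(x)\| \leq C(\lambda,K,\Gamma)\,\|y\|$.

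For $(b) \Rightarrow (a)$: taking $A = \emptyset$ in~(b) yields $\|G_m(x)\| \leq (1+C)\|x\|$, so $(e_n)$ is quasi-greedy. For democracy, given disjoint $A, B \in \N^m$, apply~(b) to $x := 1_A + (1+\varepsilon) 1_B$ (whose greedy set of size $m$ is $B$) with $A' := A$ and coefficients $1$; this gives $\|1_A\| \leq C(1+\varepsilon)\|1_B\|$, and symmetrizing plus an intermediate disjoint set in infinite dimensions promotes this to full democracy. The characterization almost greedy $\Leftrightarrow$ quasi-greedy $+$ democratic from \cite{S} then yields (a). The main obstacle is the projection bound in the $(a) \Rightarrow (b)$ direction: without $K_b$ one cannot peel off a Schauder projection as in Theorem~\ref{gpg}, and the identification $\Lambda'_1 = \{i : |e_i^*(y)| > t\}$---valid precisely because coefficients of $y$ outside $A \cup \Lambda_m(x)$ are bounded by $t$---is what recasts the restricted sum as a greedy approximation amenable to quasi-greediness, with the overlap bound $|A\cap\Lambda_m(x)|\le\lambda m$ supplying enough "reserve" indices in $B$ to convert $t$ into a bound on $\phi(2m)\|y\|^{-1}$.
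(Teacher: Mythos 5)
Your proof of the main direction $(a)\Rightarrow(b)$ is correct but follows a genuinely different route from the paper's. Both arguments begin by extracting the same quantitative consequence of the overlap hypothesis: since at least $(1-\lambda)m$ coefficients of $y=x-\sum_{i\in A}a_ie_i$ have modulus at least $t:=|e_{\rho(m)}^*(x)|$, quasi-greediness plus democracy yield $t\,\phi(2m)\leq C(\lambda,K,\Gamma)\|y\|$ (the paper gets this from \eqref{agmin} applied to $G_{(1-\lambda)m}(y)$, you from \eqref{min} applied to $y$ directly --- essentially the same step, and your use of $k=|B|$ neatly sidesteps the paper's ``assume $\lambda m\in\mathbb{N}$'' normalization). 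The arguments then diverge: the paper bounds $\|G_{2m}(x)-G_m(x)\|$ by $Ct\phi(m)$ and must pass from $\|(x-G_m(x))-(x-G_{2m}(x))\|$ to $\|x-G_m(x)\|$ via a two-case analysis that invokes \cite[Theorem~3.3]{S} in the second case; you instead bound $\|x-G_m(x)\|$ directly through the exact identity $x-G_m(x)=P_{A\setminus\Lambda_m(x)}(x)+\bigl(y-P_{A\cup\Lambda_m(x)}(y)\bigr)$ together with the observation that the above-threshold part of $P_{A\cup\Lambda_m(x)}(y)$ is a genuine greedy sum of $y$ (because every coefficient of $y$ outside $A\cup\Lambda_m(x)$ is at most $t$), the below-threshold part being controlled by $2Kt\phi(2m)$ via \eqref{cuc}. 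Your version is more self-contained --- no appeal to \cite[Theorem~3.3]{S} and no case split --- at the price of the threshold bookkeeping. One minor caveat in your converse direction: for democracy you only treat disjoint sets carefully and then invoke ``an intermediate disjoint set in infinite dimensions,'' which is unavailable when $X$ is finite-dimensional (permitted in this paper); the paper's splitting $\|1_B\|\leq\|1_{A\cap B}\|+\|1_{B\setminus A}\|$ combined with $\|1_E\|\leq(1+C)\|1_A\|$ for $E\subseteq A$ works in all dimensions and is the easy repair.
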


\begin{proof}
First we show that $(b)$ implies $(a)$.  Setting $\lambda=0$, $(b)$ implies $\|G_m(x)\| \le (1+C)\|x\|$ for all $m\ge1$ and for all $x \in X$, and hence that
$(e_n)$ is quasi-greedy. Now we show, setting $\lambda=0$ again,  that $(b)$ implies that $(e_n)$ is democratic. Let $A,B \in \mathbb{N}^m$. First suppose that $A$ and $B$ are disjoint.
Then, applying $(b)$ to $x = 1_B+ (1+\varepsilon)1_A$ yields. $\|1_B\| \leq C\|1_A\|$. For the general case, note that $(b)$ implies $\|1_{E}\| \le (1+C)\|1_A\|$ for all $E\subseteq A$. Hence
$$\| 1_B \| \le \|1_{A \cap B}\| + \|1_{B\setminus A}\| \leq (1+C)\|1_A\| + C\|1_{A\setminus B}\| \leq (1+C)^2\|1_A\|.$$
Hence $(e_n)$ is democratic. So $(e_n)$ is quasi-greedy and democratic, and hence almost greedy.

Now we show that $(a)$ implies $(b)$.
Let $\phi(n)$ be the fundamental function for an almost greedy basis
$(e_n)$.  In the following inequalities the constants $C_1,C_2$ etc. depend only on $\lambda$ and  the quasi-greedy and democratic constants of $(e_n)$.
 Let $A \subset
\mathbb{N}$ with $|A\cap \Lambda_m(x)|\leq \lambda m$ and $|A|\leq
m$. We may assume without loss of generality that $\lambda m \in \mathbb{N}$.
 Then for all coefficients $(a_i)_{i\in A}$,
\begin{align*}
\| x-\underset{i\in A}{\sum}a_ie_i \|& \geq C_1 \|G_{(1-\lambda)m}(x-\underset{i\in A}{\sum}a_ie_i)\|\\
                               &\geq C_2 |e_{\rho(m)}^*(x)|\phi((1-\lambda)m)\\
                               &\geq  C_2(1-\lambda)|e_{\rho(m)}^*(x)|\phi(m)\\
                               &\geq C_3 \|G_{2m}(x)-G_m(x)\|\\
                               &= C_3\|(x-G_{m}(x))-(x-G_{2m}(x))\|.
\end{align*}
The first inequality follows from quasi-greediness of $(e_n)$.
The second inequality follows from  $(\ref{agmin})$ and the fact
that $|A\cap \Lambda_m(x)|\leq \lambda m$, so the largest
$(1-\lambda)m$ coefficients of $x-\underset{i\in A}{\sum}a_ie_i$  are at least $|e_{\rho(m)}^*(x)|$.
The third inequality follows from the fact that $\phi(r) \ge \dfrac{r}{s} \phi(s)$ for positive integers $r \le s$.
The
fourth inequality follows from $(\ref{cuc})$.

Now consider two cases. First suppose that $\|x-G_{2m}(x)\|\leq
\frac{1}{2} \|x-G_{m}(x)\|$. Then by the  triangle inequality
\begin{align*}
\| x-\underset{i\in A}{\sum}a_ie_i \| \geq
C_3(\|(x-G_{m}(x)\|-\|x-G_{2m}(x)\|)\geq \frac{C_3}{2}\|x-G_m(x)\|.
\end{align*}
Now suppose  $\|x-G_{2m}(x)\|>\frac{1}{2} \|x-G_{m}(x)\|$. Then by
 \cite[Theorem~3.3]{S}, we have
\begin{align*}
\|x-G_m(x)\|\leq 2\|x-G_{2m}(x)\|\leq C_4\sigma_m(x)\leq C_4 \|
x-\underset{i\in A}{\sum}a_ie_i \|.
\end{align*}

Thus in both the cases we get
\begin{align*}
\|x-G_m(x)\|\leq C \| x-\underset{i\in A}{\sum}a_ie_i \|
\end{align*}
for some constant $C$.
\end{proof}
\begin{rem} Theorem~\ref{thm: agcharacterization} complements \cite[Theorem~3.2]{DKK}  which states that if $(e_n)$ is an  almost greedy basis then there exists $C>0$ such that  for all $x \in X$ and $n \ge 1$ there exist scalars $a_i$ ($i \in \Lambda_n(x)$) such that
$$\|x - \sum_{i \in \Lambda_n(x)}a_i e_i\| \le C \sigma_n(x).$$ \end{rem}
The following characterization of greedy bases was proved in
\cite{BB}.
\begin{thm}
Let $(e_n)$ be a basis of a Banach space $X$. Then $(e_n)$ is a greedy
basis if and only if there exists some constant $C$ such that
\begin{align*}
\|x-G_m(x)\| \leq C   \{d(x,a1_A):A\subset \N,~a\in \R,~|A|=m\}
\end{align*}
for all $x\in X$ and $m\in \N$.
\end{thm}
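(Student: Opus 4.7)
The plan is to reduce the characterization to the Konyagin--Temlyakov theorem: a basis is greedy if and only if it is both quasi-greedy and democratic. The necessity direction is immediate. If $(e_n)$ is greedy with constant $C$, then any vector of the form $a 1_A$ with $|A|=m$ is a particular $m$-term linear combination of basis vectors, so $\sigma_m(x)\le \inf\{\|x-a1_A\|:|A|=m,\, a\in\R\}$, whence $\|x-G_m(x)\|\le C\sigma_m(x)\le C\inf\{\|x-a1_A\|:|A|=m,\, a\in\R\}$.

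For sufficiency, I would first take $a=0$ and any $A$ with $|A|=m$ in the hypothesis to get $\|x-G_m(x)\|\le C\|x\|$, hence $\|G_m(x)\|\le (1+C)\|x\|$, so $(e_n)$ is quasi-greedy. It remains to prove that $(e_n)$ is democratic.

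For the democracy step, first handle the disjoint case: given disjoint $A,B\in\N^m$, set $x=1_A+2\cdot 1_B$. Then $G_m(x)=2\cdot 1_B$, so $\|x-G_m(x)\|=\|1_A\|$. Applying the hypothesis with the specific approximation $E=A$, $a=1$ gives
$$\|1_A\|\;=\;\|x-G_m(x)\|\;\le\; C\,\|x-1_A\|\;=\;2C\|1_B\|.$$
For the general case, given $A,B\in\N^m$, write $A=(A\cap B)\cup(A\setminus B)$ and observe that $A\setminus B$ and $B\setminus A$ are disjoint and of equal cardinality. Using the quasi-greedy consequence that $\|1_E\|\le K\|1_F\|$ whenever $E\subseteq F$ (since all coefficients of $1_F$ are equal, one can take $G_{|E|}(1_F)=1_E$), together with the triangle inequality and the disjoint case applied to $A\setminus B$ and $B\setminus A$,
$$\|1_A\|\;\le\;\|1_{A\cap B}\|+\|1_{A\setminus B}\|\;\le\;K\|1_B\|+2C\|1_{B\setminus A}\|\;\le\;(K+2CK)\|1_B\|,$$
and symmetrically $\|1_B\|\le (K+2CK)\|1_A\|$. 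Hence $(e_n)$ is democratic, and by the Konyagin--Temlyakov theorem $(e_n)$ is greedy.

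The only place requiring any thought is moving from disjoint $A,B$ to arbitrary $A,B$ of equal size, but the standard splitting trick together with the quasi-greedy projection estimate $\|1_E\|\le K\|1_F\|$ for $E\subseteq F$ handles this routinely. So there is no substantive obstacle; the proof is essentially a clever choice of test vector $x=1_A+2\cdot 1_B$ combined with the infimum being realized (up to a factor of $2$) by the competitor $1_A$.
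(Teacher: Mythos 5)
Your necessity direction, the quasi-greediness step (taking $a=0$), and the democracy argument (the test vector $1_A+2\cdot 1_B$ for disjoint $A,B\in\N^m$, then the splitting $A=(A\cap B)\cup(A\setminus B)$ together with the standard subset estimate $\|1_E\|\le K\|1_F\|$ for $E\subseteq F$ valid for quasi-greedy bases) are all correct. The fatal step is the final sentence. The Konyagin--Temlyakov theorem characterizes greedy bases as \emph{unconditional} and democratic; it does not say that quasi-greedy plus democratic implies greedy. What you have extracted from the hypothesis is exactly quasi-greediness and democracy, and by the theorem of Dilworth--Kalton--Kutzarova--Temlyakov (reference [S] of this paper) that combination characterizes \emph{almost greedy} bases, not greedy ones. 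Since there exist conditional quasi-greedy democratic bases, almost greedy does not imply greedy, so your argument, if it were valid, would prove the false statement that every almost greedy basis is greedy. (The paper itself gives no proof of this theorem; it is quoted from Bern\'a--Blasco [BB], and the hard part of that result is precisely the step you skipped.)

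Concretely, the missing content is that the hypothesis forces unconditionality (equivalently, once democracy is known, the full bound $\|x-G_m(x)\|\le C'\sigma_m(x)$ with arbitrary coefficients in the competitor). Your proof only ever tests the hypothesis with $a=0$ or with vectors built from indicator sums with two coefficient levels, and such tests can never detect more than quasi-greediness and (super)democratic behaviour on constant-coefficient vectors; quasi-greediness by itself yields only logarithmic control of projections $\|P_Bx\|$ in terms of $\|x\|$, so no rearrangement of the properties you derived can close the gap. One must use competitors $a1_A$ with $a\ne 0$ interacting with the actual coefficients of $x$ (for instance, the one-point case already gives $\|x-e_j^*(x)e_j\|\le C\|x\|$ by applying the hypothesis to $y=x-e_j^*(x)e_j+te_j$ with $t$ large and competitor $(t-e_j^*(x))e_j$, but iterating this naively blows up the constant), and upgrading this to uniform suppression unconditionality is the substantive argument of [BB] that your proposal is missing.
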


We now prove the similar result for almost greedy bases.
\begin{thm}
Let $(e_n)$ be a basis of a Banach space $X$. Then \TFAE \bla
\item $(e_n)$ is almost greedy basis.
\item There exists some constant $C$ such that for all
$x\in X$, $m\in \N$, $a\in \R$, 
 $A \subset\mathbb{N}$ with
$|A|\leq m$, and either  $A <\alpha_m(x)$ or $A > \beta_m(x)$,
 we have
\begin{align}\label{thm: greedy}
\|x-G_m(x)\| \leq C d(x,a1_A).
\end{align}
\el
\end{thm}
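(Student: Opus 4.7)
The approach is to establish each direction using results already proved earlier in the paper. The forward direction $(a) \Rightarrow (b)$ will be essentially immediate from Theorem~\ref{gag}. The reverse direction $(b) \Rightarrow (a)$ will follow the template used in the converse of Theorem~\ref{thm: agcharacterization}: first extract quasi-greediness from a trivial choice of parameters, then extract the conservative and reverse conservative properties from suitable two-block test vectors, combine these via Lemma~\ref{crd} to obtain democracy, and finally conclude almost greediness via the characterization by quasi-greediness plus democracy from \cite{S}.

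For $(a) \Rightarrow (b)$, I observe that any set $A$ satisfying either $A < \alpha_m(x)$ or $A > \beta_m(x)$ is automatically contained in $\{1,\dots,\alpha_m(x)-1\} \cup \{\beta_m(x)+1,\dots\}$, so Theorem~\ref{gag} applies. Choosing the coefficients $a_i$ in that theorem to all equal the single constant $a$ yields $\|x - G_m(x)\| \le C\|x - a\cdot 1_A\| = C\,d(x, a\cdot 1_A)$, which is exactly (b).

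For $(b) \Rightarrow (a)$, I first take $A = \emptyset$ and $a = 0$; the ordering conditions are vacuous, so (b) gives $\|x - G_m(x)\| \le C\|x\|$ and hence $\|G_m(x)\| \le (1+C)\|x\|$, so $(e_n)$ is quasi-greedy. Next, given finite sets $A, B$ with $|A| \le |B| = m$ and $A < B$, I apply (b) to the test vector $x = 1_A + (1+\varepsilon)1_B$: here $\Lambda_m(x) = B$, so $x - G_m(x) = 1_A$, and the set $A$ satisfies $A < \alpha_m(x) = \min B$. Taking the scalar $a = 1$ in (b) gives $\|1_A\| \le C(1+\varepsilon)\|1_B\|$, and letting $\varepsilon \to 0$ yields the conservative property. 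The reverse conservative property follows from the same argument with the ordering reversed ($B < A$), in which case the set $A$ instead satisfies $A > \beta_m(x) = \max B$. Lemma~\ref{crd} then combines these to give democracy, and quasi-greedy plus democratic equals almost greedy.

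The proof is quite direct given the preceding results; the only point requiring any attention is that the one-sided constraint in (b) must be exploited twice, once on each side of $\Lambda_m(x)$, in order to extract \emph{both} the conservative and reverse conservative properties before Lemma~\ref{crd} can be invoked.
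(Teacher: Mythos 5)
Your proposal is correct and follows essentially the same route as the paper: the implication $(a)\Rightarrow(b)$ is read off from Theorem~\ref{gag}, and for $(b)\Rightarrow(a)$ the paper likewise extracts quasi-greediness directly from \eqref{thm: greedy}, obtains the conservative and reverse conservative properties from the test vectors $1_A+(1+\varepsilon)1_B$ with $A<B$ and $B<A$, and concludes democracy via Lemma~\ref{crd}. Your explicit remark that the one-sided condition must be used on both sides of $\Lambda_m(x)$ is exactly the point the paper handles with its ``similarly'' step, so no gap remains.
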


\begin{proof}
Theorem~\ref{gag} shows that $(a)$ implies $(b)$.

Now we prove that $(b)$ implies $(a)$. From (\ref{thm: greedy}) it
follows that $\|G_m(x)\| \leq (C+1) \|x\|$ for all $x\in X$ and $m\in
\N$. Hence $(e_n)$ is quasi-greedy.  For any $\varepsilon >0$, $A,B\in \N^{< \infty}$ with $A<B$ and
$|A| \leq |B|$ consider $x:=1_A+(1+\varepsilon)1_B$. Clearly from
(\ref{thm: greedy}) it follows that $\|1_A\|=\|x-G_{|B|}(x)\| \leq
C(1+\varepsilon) \|1_B\|$. Since $\varepsilon>0$ is arbitrary, we get
$\|1_A\|\leq C\|1_B\|$.

Similarly, if we consider any $A,B\in \N^{<\infty}$ with $A>B$ and
$|A| \leq |B|$ then we can prove that $\|1_A\|\leq C\|1_B\|$.

Thus from Lemma~\ref{crd} it follows that the basis is democratic
and this proves that $(b)$ implies $(a)$.
\end{proof}

\section{Characterization of 1-partially and 1-reverse partially
greedy bases}

\begin{lem}\label{l2}
A basis $(e_n)$ of Banach space $X$ satisfies (\ref{qc})
 with $C=1$ if and only if for any $x\in X$ and
$j<\al_1(x)$
\begin{equation} \label{eq: constantone}
\|x-G_1(x)\|\leq \min(\|x\|, \|x-e_j^*(x)e_j\|).
\end{equation}
\end{lem}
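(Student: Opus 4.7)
The plan is to establish the two directions separately; the forward direction is immediate, while the reverse direction requires an induction on $m$ that peels off one greedy coordinate at a time.

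For the forward direction, taking $m=1$ in (\ref{qc}) with $C=1$ gives $\|x-G_1(x)\|\le {\stackrel{\sim}\sigma}_1^L(x)$. Since the infimum defining ${\stackrel{\sim}\sigma}_1^L(x)$ is over sets $A\subset \N$ with $|A|\le 1$ and $A<\al_1(x)$, the test sets $A=\emptyset$ and $A=\{j\}$ (for each $j<\al_1(x)$) yield the two bounds $\|x\|$ and $\|x-e_j^*(x)e_j\|$, producing (\ref{eq: constantone}).

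For the reverse direction, I would induct on $m$. The base case $m=1$ is just (\ref{eq: constantone}) after taking the infimum over admissible $j$. For the inductive step, fix $x\in X$ and a test set $A\subset \N$ with $|A|\le m+1$ and $A<\al_{m+1}(x)$; the goal is $\|x-G_{m+1}(x)\|\le \|x-P_A(x)\|$. The pivotal auxiliary element is $y:=x-G_1(x)$, whose greedy ordering is $\rho(2),\rho(3),\ldots$, so that $y-G_m(y)=x-G_{m+1}(x)$ and $\al_m(y)\ge \al_{m+1}(x)>\max A$. When $|A|\le m$, the inductive hypothesis applied to $y$ with the set $A$ yields $\|x-G_{m+1}(x)\|\le \|y-P_A(y)\|$; since $\rho(1)\notin A$ (because $\max A<\al_{m+1}(x)\le \rho(1)$), we have $P_A(y)=P_A(x)$, hence $\|x-G_{m+1}(x)\|\le \|x-G_1(x)-P_A(x)\|$. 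Then I would apply (\ref{eq: constantone}) to $z:=x-P_A(x)$: since $A\cap \Lambda_{m+1}(x)=\es$, the maximum coefficient of $z$ is still attained at $\rho(1)$, so $G_1(z)=G_1(x)$ and $\al_1(z)=\rho(1)$; the $\|z\|$ bound then gives $\|x-G_1(x)-P_A(x)\|\le \|x-P_A(x)\|$.

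When $|A|=m+1$, I would pick any $j^*\in A$, set $A':=A\setminus\{j^*\}$, and run the previous case with $A'$ to obtain $\|x-G_{m+1}(x)\|\le \|x-G_1(x)-P_{A'}(x)\|$. Now apply (\ref{eq: constantone}) to $z':=x-P_{A'}(x)$ at the index $j=j^*$: the choice is legitimate because $j^*<\al_{m+1}(x)\le \rho(1)=\al_1(z')$ and $e_{j^*}^*(z')=e_{j^*}^*(x)$; this gives $\|z'-G_1(z')\|\le \|z'-e_{j^*}^*(x)e_{j^*}\|=\|x-P_A(x)\|$, which is exactly what is needed.

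The main technical point to verify carefully is that the greedy orderings of the perturbed vectors $y=x-G_1(x)$ and $z=x-P_A(x)$ behave as claimed---in particular that $G_1(z)=G_1(x)$ and $\al_m(y)\ge \al_{m+1}(x)$. Both facts are consequences of the hypothesis $A<\al_{m+1}(x)$, which places $A$ entirely to the left of (and disjoint from) the top $m+1$ greedy coordinates of $x$, so that zeroing out $A$ cannot create a new leading coefficient and cannot disturb $\Lambda_{m+1}(x)$.
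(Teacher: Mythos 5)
Your proof is correct and is essentially the paper's argument: the forward direction is identical, and the reverse direction, like the paper's, works by iterating the one-term inequality \eqref{eq: constantone} on residual vectors whose top coefficient is a greedy coefficient of $x$, trading greedy coordinates for coordinates of $A$ (the key point in both being that zeroing coordinates to the left of $\al_m(x)$ does not disturb the greedy selection). The only difference is bookkeeping: you induct on $m$ by peeling off $G_1(x)$ and splitting into the cases $|A|\le m$ and $|A|=m+1$, whereas the paper fixes $k=|A|$, swaps $e_{\rho(k)},e_{\rho(k-1)},\ldots$ for $e_{j_1},e_{j_2},\ldots$ one at a time, and then invokes the monotonicity $\|x-G_m(x)\|\le\|x-G_k(x)\|$ obtained from the $\|x\|$-part of \eqref{eq: constantone}.
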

\begin{proof}
If (\ref{qc})
is satisfied with $C=1$ then \eqref{eq: constantone} follows immediately.

Conversely, suppose that \eqref{eq: constantone} holds.  Note that  iterating \eqref{eq: constantone}
yields  $$\|x - G_m(x)\| \le \|x -G_k(x)\| \le  \|x\|$$ for all $m \ge k \ge 1$.
Suppose that $A < \alpha_m(x)$ and $0 \le |A| =  k \leq m$.  Let $A=\{j_1,\dots,j_k\}$, where
$j_i < \alpha_m(x)$  for $1 \le i \le k$. Let $y=x-\sum_{j=1}^{k-1}e_{\rho(j)}^*(x)e_{\rho(j)}$.
 Then
\begin{align*}
\|x-G_k(x)\|&=\|(x-\sum_{i=1}^{k-1}e_{\rho(i)}^*(x)e_{\rho(i)})-e_{\rho(k)}^*(x)e_{\rho(k)}\|\\ &=\|y-G_1(y)\|\\&\leq
\|y-e_{j_1}^*(y)e_{j_1}\|\\&=\|x-\sum_{i=1}^{k-1}e_{\rho(i)}^*(x)e_{\rho(i)}-e_{j_1}^*(x)e_{j_1}\|
\end{align*}
since  $j_1<\alpha_m(x) \leq \rho(m)$. Continuing in this way we get
\begin{align*}
\|x-G_k(x)\|\leq \|x-\sum_{i=1}^ke_{j_i}^*(x)e_{j_i}\|,
\end{align*}
and hence $$\|x-G_m(x)\|\leq \|x - G_k(x)\| \leq  \|x-\sum_{i=1}^ke_{j_i}^*(x)e_{j_i}\|.$$
Thus, $\|x-G_m(x)\|\leq {\stackrel{\sim}\sigma}_m^L(x)$.
\end{proof}

\begin{thm}
A basis $(e_n)$ of a Banach space is $1$-partially greedy  (i.e., satisfies $(\ref{qc})$ with $C=1$)
if and only if for any $x\in X$ and  $j,k\in \N\setminus supp(x)$,
with $j < k$,
\begin{equation} \label{eq: propertyAcharacterization}
\max(\|x\|,\|x+se_j\|) \leq\|x+te_k\|
\end{equation}
where $|s|= |t| \ge \max|e^*_i(x)|$.
\end{thm}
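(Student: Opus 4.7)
For the forward direction, fix $x\in X$, indices $j<k$ in $\N\setminus supp(x)$, and scalars $s,t$ with $|s|=|t|\ge \max_i|e_i^*(x)|$. Consider the two auxiliary vectors $y_1:=x+te_k$ and $y_2:=x+se_j+te_k$. In each, the coefficient at position $k$ has modulus $|t|$, which is at least the modulus of every other nonzero coefficient; breaking any tie (in $y_2$, between $j$ and $k$) in favor of $k$, we may select a greedy ordering with $\rho(1)=k$, so that $G_1(y_i)=te_k$, $\alpha_1(y_i)=k$, and hence $y_1-G_1(y_1)=x$, $y_2-G_1(y_2)=x+se_j$. Applying the $1$-partial-greedy inequality $(\ref{qc})$ to $y_1$ with $A=\emptyset$ (so that $\tilde\sigma_1^L(y_1)\le\|y_1\|$) gives $\|x\|\le\|x+te_k\|$; applying it to $y_2$ with $A=\{j\}$, legitimate since $j<\alpha_1(y_2)=k$ and $|A|\le 1$, gives $\|x+se_j\|\le\|y_2-e_j^*(y_2)e_j\|=\|x+te_k\|$.

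For the converse, I invoke Lemma~\ref{l2}, so it suffices to show that for every $x\in X$ and every $j<\alpha_1(x)=:k$ one has $\|x-G_1(x)\|\le\min(\|x\|,\|x-e_j^*(x)e_j\|)$. Set $\tilde x:=x-e_j^*(x)e_j-e_k^*(x)e_k$, so that $j,k\notin supp(\tilde x)$, $j<k$, and $|e_k^*(x)|\ge\max_i|e_i^*(\tilde x)|$. Taking $|s|=|t|=|e_k^*(x)|$, $t=e_k^*(x)$, and $s=\pm|e_k^*(x)|\,\mathrm{sign}(e_j^*(x))$ in the hypothesis yields
\begin{align*}
\bigl\|\tilde x\pm|e_k^*(x)|\,\mathrm{sign}(e_j^*(x))\,e_j\bigr\|\le\|\tilde x+e_k^*(x)e_k\|=\|x-e_j^*(x)e_j\|.
\end{align*}
The key observation is that $x-G_1(x)=\tilde x+e_j^*(x)e_j$ can be written as the convex combination
\begin{align*}
\tfrac{1+r}{2}\bigl(\tilde x+|e_k^*(x)|\,\mathrm{sign}(e_j^*(x))\,e_j\bigr)+\tfrac{1-r}{2}\bigl(\tilde x-|e_k^*(x)|\,\mathrm{sign}(e_j^*(x))\,e_j\bigr),
\end{align*}
where $r:=|e_j^*(x)|/|e_k^*(x)|\in[0,1]$; the triangle inequality then gives $\|x-G_1(x)\|\le\|x-e_j^*(x)e_j\|$. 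Whenever $j<k$ can be chosen outside $supp(x)$, the right-hand side collapses to $\|x\|$, supplying the remaining half of the minimum.

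The main obstacle I anticipate is the boundary situation $\{1,\dots,k-1\}\subseteq supp(x)$---in particular the case $k=1$---where no $j<k$ outside $supp(x)$ is available to derive $\|x-G_1(x)\|\le\|x\|$ directly from the convex-combination trick. To treat this case I would augment $x$ by a coefficient $\beta e_{k'}$ at some index $k'$ strictly to the right of $\max supp(x)$ (so that a gap to the left of $k'$ is automatic), apply the convex-combination estimate in the enlarged configuration, and then transfer the bound back to $x$ by removing the auxiliary term. Carrying out this transfer without losing the sharp constant $1$ is the most delicate step.
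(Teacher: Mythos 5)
Your argument coincides with the paper's in all essentials. The forward direction is the same construction: where you break the tie in $y_2=x+se_j+te_k$ in favour of $k$ (legitimate only under the convention that \eqref{qc} must hold for \emph{every} admissible choice of $\Lambda_1$), the paper uses $(1+\varepsilon)t$ at position $k$ and lets $\varepsilon\to 0$, which is convention-free but otherwise identical. The converse is the same reduction to Lemma~\ref{l2} followed by the identical convexity step
$\|\tilde x+e_j^*(x)e_j\|\leq \underset{\pm}{\max}\|\tilde x\pm se_j\|\leq\|\tilde x+se_k\|$, including the observation that taking $j\notin supp(x)$ yields the $\|x\|$ half of the minimum.

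The one place you diverge is the boundary case you flag, and there your instinct is right but your proposed repair cannot work. The gap is genuine: if $\{1,\dots,k-1\}\subseteq supp(x)$, the hypothesis read literally never compares $\|x-G_1(x)\|$ with $\|x\|$. Indeed, on $\R^2$ with $\|(a,b)\|=\max(|a-b|,|b|/2)$ the only $x$ admitting a pair $j<k$ off its support is $x=0$, so \eqref{eq: propertyAcharacterization} reduces to $|s|\,\|e_1\|\leq|t|\,\|e_2\|$, which holds; yet for $x=e_1+\tfrac32e_2$ one gets $\|x-G_1(x)\|=\|e_1\|=1>\tfrac34={\stackrel{\sim}\sigma}_1^L(x)$, so the literal statement is false and no argument can rescue it. In particular your padding device fails for a structural reason: every instance of the hypothesis produces an \emph{upper} bound by $\|\cdot+te_{k'}\|$ with $k'$ the largest index involved, so adjoining $\beta e_{k'}$ far to the right only yields upper bounds by quantities involving $e_{k'}$, and nothing transfers back to the needed lower bound on $\|x+te_k\|$ for $k<k'$. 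Note that the paper's own converse has the same blind spot: it applies $\|y\|\leq\|y+e_k^*(x)e_k\|$ to $y=x-G_1(x)$ without checking that a companion $j<k$ outside $supp(y)$ exists. The intended reading is that the first half of the hypothesis, $\|x\|\leq\|x+te_k\|$, is asserted for every $k\notin supp(x)$ with no companion $j$ required --- which is precisely what your forward direction (via $y_1$ and $A=\emptyset$) establishes --- and under that reading the boundary case disappears and your proof is complete.
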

\begin{proof}
Suppose $(e_n)$  satisfies $(\ref{qc})$ with $C=1$. Let us verify
\eqref{eq: propertyAcharacterization}. For any $\varepsilon >0$
consider $y=x+se_j+(1+\varepsilon)te_k$ where $j,k\in \N\setminus
supp(x)$, $j<k$ and $|s|= |t| \ge \max|e^*_i(x)|$. Clearly,
$G_1(y)=(1+\varepsilon)te_k$ and thus
\begin{align*}
\|x+se_j\|=\|y-G_1(y)\|\leq \|y-e_j^*(y)e_j\|=\|x+(1+\varepsilon)te_k\|.
\end{align*}
Letting $\varepsilon \longrightarrow 0$, we get $\|x+se_j\|\leq \|x+te_k\|$. Similarly, setting $z = x + (1+\varepsilon)te_k$,
$$\|x\| = \|z - G_1(z)\| \le \|z\| = \|x + (1+\varepsilon)te_k\|,$$
and hence $\|x\| \le \|x+ te_k\|$. Thus, \eqref{eq: propertyAcharacterization} is satisfied.

Conversely, suppose that  \eqref{eq: propertyAcharacterization} is satisfied. Let $x \in X$. Suppose that $G_1(x) = e_k^*(x)e_k$
and suppose $j < k$. Consider $y = x - G_1(x)$. Then by \eqref{eq: propertyAcharacterization},
$$ \|x - G_1(x)\| =\|y\| \le \|y + e_k^*(x)e_k\| = \|x\|.$$
Now consider $ z = x - e_j^*(x)e_j - e^*_k(x)e_k$. Then by  \eqref{eq: propertyAcharacterization}
$$\|z  \pm se_j\| \le \|z + se_k\|,$$
where $s = e_k^*(x)$. By convexity, since $|e^*_j(x)| \le |s|$,
$$\|z + e_j^*(x)e_j\| \le \|z +se_k\|,$$
i.e.,
$$\|x - G_1(x)\| \le \|x - e_j^*(x)e_j\|.$$ Hence $(e_n)$ satisfies \eqref{eq: constantone}. So, by Lemma~\ref{l2},  $(e_n)$ satisfies
\eqref{qc} with $C=1$.
  \end{proof}

Similar arguments yields the following results for $1$-reverse
partially greedy bases.

\begin{lem}
A basis $(e_n)$ of Banach space $X$ satisfies (\ref{qr})
 with $C=1$ if and only if for any $x\in X$ and
$j>\be_1(x)$
\begin{equation} \label{eq: reverseconstantone}
\|x-G_1(x)\|\leq \min(\|x\|, \|x-e_j^*(x)e_j\|).
\end{equation}
\end{lem}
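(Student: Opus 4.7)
The plan is to mirror the proof of Lemma~\ref{l2} with the roles of ``left'' and ``right'' interchanged. For the forward direction, I would apply \eqref{qr} with $C=1$, $m=1$, and $A=\emptyset$ to obtain $\|x-G_1(x)\|\le \|x\|$, then apply it again with $A=\{j\}$ (admissible since $j>\be_1(x)$) to obtain $\|x-G_1(x)\|\le \|x-e_j^*(x)e_j\|$; combining these yields \eqref{eq: reverseconstantone}.

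For the reverse direction, I would first iterate \eqref{eq: reverseconstantone} applied to $y=x-G_{k-1}(x)$, for which $G_1(y)=e_{\rho(k)}^*(x)e_{\rho(k)}$ and $y-G_1(y)=x-G_k(x)$, to obtain the monotonicity bound
$$\|x-G_m(x)\|\le \|x-G_k(x)\|\le \|x\|\qquad\text{for all }m\ge k\ge 1.$$
Then I would fix $x\in X$, $m\in\N$, and $A=\{j_1<\dots<j_k\}$ with $A>\be_m(x)$ and $0\le k\le m$, and trade greedy indices for the indices $j_i$ one at a time. Setting
$$u_\ell:=x-\sum_{i=1}^{k-\ell}e_{\rho(i)}^*(x)e_{\rho(i)}-\sum_{i=1}^{\ell}e_{j_i}^*(x)e_{j_i}\qquad (0\le \ell\le k),$$
one has $u_0=x-G_k(x)$ and $u_k=x-P_A(x)$. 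At the $\ell$-th step, let $y_\ell=u_{\ell-1}+e_{\rho(k-\ell+1)}^*(x)e_{\rho(k-\ell+1)}$; then $y_\ell-G_1(y_\ell)=u_{\ell-1}$ and $\be_1(y_\ell)=\rho(k-\ell+1)$, so \eqref{eq: reverseconstantone} gives $\|u_{\ell-1}\|\le \|y_\ell-e_{j_\ell}^*(y_\ell)e_{j_\ell}\|=\|u_\ell\|$. Chaining these $k$ inequalities together with the monotonicity bound yields $\|x-G_m(x)\|\le \|x-P_A(x)\|$, and since $A$ was arbitrary, $\|x-G_m(x)\|\le {\stackrel{\sim}\sigma}_m^R(x)$.

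The main point to verify at each step of the induction is the admissibility condition $j_\ell>\be_1(y_\ell)$ for \eqref{eq: reverseconstantone}: this follows from $\be_1(y_\ell)=\rho(k-\ell+1)\le \be_m(x)<j_\ell$, using $k-\ell+1\le m$ together with the hypothesis $A>\be_m(x)$. Identifying $G_1(y_\ell)$ as $e_{\rho(k-\ell+1)}^*(x)e_{\rho(k-\ell+1)}$ also requires a brief check, since one must observe that zeroing out the $j_i$-coefficients does not disturb the top of the greedy ordering (each $j_i$ lies outside $\Lambda_m(x)$, hence has absolute coefficient value at most $|e_{\rho(m)}^*(x)|\le |e_{\rho(k-\ell+1)}^*(x)|$). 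No real obstacle arises, and the argument is formally dual to that of Lemma~\ref{l2}.
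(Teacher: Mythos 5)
Your proof is correct and is exactly what the paper intends here: the paper offers no separate argument for this lemma, stating only that it follows by ``similar arguments'' to Lemma~\ref{l2}, and your dualization (swapping $A<\al_m(x)$ for $A>\be_m(x)$ and checking $j_\ell>\be_1(y_\ell)=\rho(k-\ell+1)\le\be_m(x)$ at each exchange step) is precisely that argument, with the bookkeeping made more explicit than in the original.
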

\begin{thm}
A basis $(e_n)$ of a Banach space is $1$-reverse partially greedy  (i.e.,  satisfies $(\ref{qr})$ with $C=1$)  if and only if for any $x\in X$ and  $j,k\in \N\setminus supp(x)$,
with $j >k$,
\begin{equation} \label{eq: reversepropertyAcharacterization}
\max(\|x\|,\|x+se_j\|) \leq\|x+te_k\|
\end{equation}
where $|s|= |t| \ge \max|e^*_i(x)|$.
\end{thm}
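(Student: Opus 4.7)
The plan is to mirror the proof of the preceding $1$-partially greedy characterization, reversing orientations throughout: we replace ``$j<k$'' by ``$j>k$'' and invoke the reverse version of Lemma~\ref{l2} stated immediately above. That lemma reduces the ``if'' direction of \eqref{qr} with $C=1$ to the single inequality $\|x-G_1(x)\|\le \min(\|x\|,\|x-e_j^*(x)e_j\|)$ for every $x\in X$ and every $j>\be_1(x)$.

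For the forward implication, fix $x\in X$, indices $j>k$ with $j,k\in\N\setminus supp(x)$, and scalars $s,t$ with $|s|=|t|\ge \max|e_i^*(x)|$. Given $\varepsilon>0$, set $y:=x+se_j+(1+\varepsilon)te_k$. Since $|(1+\varepsilon)t|$ strictly exceeds every other coefficient of $y$, we have $G_1(y)=(1+\varepsilon)te_k$ and $\be_1(y)=k$, so (because $j>k$) the singleton $A=\{j\}$ is admissible in ${\stackrel{\sim}\sigma}_1^R(y)$. Applying \eqref{qr} with $C=1$ produces $\|x+se_j\|=\|y-G_1(y)\|\le \|y-e_j^*(y)e_j\|=\|x+(1+\varepsilon)te_k\|$, and letting $\varepsilon\to 0$ yields $\|x+se_j\|\le\|x+te_k\|$. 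The remaining piece $\|x\|\le\|x+te_k\|$ comes from applying \eqref{qr} to $z:=x+(1+\varepsilon)te_k$ with $A=\emptyset$ (vacuously $>\be_1(z)=k$).

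For the converse, assume \eqref{eq: reversepropertyAcharacterization} and take $x\in X$ with $G_1(x)=e_k^*(x)e_k$ and any $j>k$. Setting $z:=x-e_j^*(x)e_j-e_k^*(x)e_k$, we have $j,k\notin supp(z)$ and $\max|e_i^*(z)|\le|e_k^*(x)|$, so \eqref{eq: reversepropertyAcharacterization} applied to $z$ with $|s|=|t|=|e_k^*(x)|$ and both signs of $s$ gives $\|z\pm|e_k^*(x)|e_j\|\le \|z+e_k^*(x)e_k\|=\|x-e_j^*(x)e_j\|$. Because $|e_j^*(x)|\le|e_k^*(x)|$, the vector $z+e_j^*(x)e_j$ is a convex combination of $z\pm|e_k^*(x)|e_j$, and convexity of the norm delivers $\|x-G_1(x)\|=\|z+e_j^*(x)e_j\|\le\|x-e_j^*(x)e_j\|$. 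For the companion inequality $\|x-G_1(x)\|\le\|x\|$, I would either pick a $j>k$ with $j\notin supp(x)$ (making $\|x-e_j^*(x)e_j\|=\|x\|$), or, in the extreme case $supp(x)\supseteq(k,\infty)$, take $j\to\infty$ and exploit the fact that $|e_n^*(x)|\to 0$ for a seminormalized Schauder basis.

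The main obstacle I anticipate is the convexity step passing from the sign-varied estimates on $z\pm|e_k^*(x)|e_j$ to the actual coefficient $e_j^*(x)$; this is where the bound $|e_j^*(x)|\le|e_k^*(x)|$ is genuinely used, and it is the only step that is not a purely mechanical reversal of the non-reverse argument. The rest of the proof is bookkeeping parallel to the $1$-partially greedy case.
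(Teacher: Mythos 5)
Your proposal is, in substance, exactly what the paper intends: the paper offers no separate proof for this theorem (it only says the arguments are similar to the $1$-partially greedy case), and your mirrored argument --- the perturbation $y=x+se_j+(1+\varepsilon)te_k$ with the admissible singleton $A=\{j\}$ (and $A=\emptyset$ for $\|x\|\le\|x+te_k\|$) in the forward direction, and the convexity step from $\|z\pm|e_k^*(x)|e_j\|\le\|z+e_k^*(x)e_k\|=\|x-e_j^*(x)e_j\|$ to $\|x-G_1(x)\|\le\|x-e_j^*(x)e_j\|$ in the converse, combined with the reverse analogue of Lemma~\ref{l2} --- is correct as written. The only point where you diverge, and where a small hole remains, is the companion inequality $\|x-G_1(x)\|\le\|x\|$ in the converse direction. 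Your two routes (choose $j>k$ with $j\notin supp(x)$, or let $j\to\infty$ using $e_j^*(x)e_j\to 0$) do not cover the finite-dimensional case, which the paper explicitly allows: if $(e_i)_{i=1}^N$ is a finite basis and either $k=\beta_1(x)=N$ or every index in $(k,N]$ lies in $supp(x)$, neither route is available, yet \eqref{eq: reverseconstantone} still demands $\|x-G_1(x)\|\le\|x\|$ (the set $A=\emptyset$ is always admissible in ${\stackrel{\sim}\sigma}_m^R$). The paper's template avoids this entirely: since the first half of \eqref{eq: reversepropertyAcharacterization} does not involve $j$, one applies it directly to $y=x-G_1(x)$ with $t=e_k^*(x)$ and $k\notin supp(y)$ to get $\|x-G_1(x)\|=\|y\|\le\|y+e_k^*(x)e_k\|=\|x\|$ in one line, with no case split and no limit. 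Substituting that one step makes your proof complete and identical in approach to the paper's.
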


\section{Branch almost greedy and weak almost
greedy bases}

In this section we will consider the \textbf{WTGA} and \textbf{BGA}. The  BGA
is a more restrictive form of WTGA. First we  recall the definition of the  BGA from
\cite{DKSW}. Let $X$ be a finite dimensional or separable infinite-dimensional
Banach space. Let $(e_n)$ be a bounded Markushevich
basis. For a given
weakness parameter $\tau$, $0<\tau<1$ and $0\not=x\in X$, define
\begin{align*}
\mathcal{A}^{\tau}(x):=\{n\in \N:|e_n^*(x)|\geq \tau \underset{n\geq
1}{max}|e_n^*(x)|\}.
\end{align*}
Let $\mathcal{G}^{\tau}:X\setminus \{0\}\longrightarrow \N$ be any
mapping satisfying the following conditions: \bla
\item $\mathcal{G}^{\tau}(x) \in \mathcal{A}^{\tau}(x)$
\item $\mathcal{G}^{\tau}(\lambda x)=\mathcal{G}^{\tau}(x)$ for all
$\lambda\ne0$
\item if $\mathcal{A}^{\tau}(y)=\mathcal{A}^{\tau}(x)$ and
$e_i^*(y)=e_i^*(x)$ for all $i \in \mathcal{A}^{\tau}(x)$ then
$\mathcal{G}^{\tau}(y)=\mathcal{G}^{\tau}(x)$. \el

In this algorithm $\mathcal{G}^{\tau}(x)$ is the index of first
selected coefficient. The subsequent  coefficients are selected by iterating
the algorithm on the residuals.

In \cite{DKSW} the authors defined the following notions of
\textbf{branch quasi-greedy $(BQG(\tau))$} and \textbf{branch almost
greedy $(BAG(\tau))$} for a given weakness parameter $\tau$
 \bla
 \item $(e_n)$ is said to be $BQG(\tau)$ with constant $C$ such that
\begin{align*}
\|\mathcal{G}_m^{\tau}(x)\|\leq C\|x\|,~x\in X,~m\in\N.
\end{align*}
\item $(e_n)$ is said to be $BAG(\tau)$ with constant $C$ such that
\begin{align*}
\|x-\mathcal{G}_m^{\tau}(x)\|\leq
C{\stackrel{\sim}\sigma}_m(x),~x\in X,~m\in\N.
\end{align*}
\el They proved that if any algebraic basis $(e_i)_{i=1}^N$ of a
$N$-dimensional normed space is $BAG(\tau)$ then the basis is both
quasi-greedy and democratic. Now we will prove the similar results
for the weaker notions than $BAG(\tau)$.

For Theorem~\ref{t1} we will consider the $BGA$ and for Theorem
\ref{t3} we  will work with the  $WTGA$. If
$\mathcal{G}_m^\tau(x)=\sum_{n\in B_m^\tau(x)} e_n^*(x)e_n$, then we
denote $\al_m^{\tau}(x)=min~ B_m^\tau(x)$ and $\be_m^{\tau}(x)=max
~B_m^\tau(x)$.

To prove Theorem~\ref{t1} first we recall a few results  from \cite{DKSW}. \bdfn Let
$0<\tau<1$. Then a basis $(e_i)_{i=1}^N$ is said to have property
$P(\tau)$ if there exists some constant $C$ such that for all sets
$A\subset \{1,\ldots,N\}$ and for all scalars $(a_i)_{i\in A}$ with
$1\leq |a_i|\leq \frac{1}{\tau^2}$, we have
\begin{align}\label{propertyp}
\underset{\pm}{max}\|\sum_{i\in A}\pm e_i\| \leq C \|\sum_{i\in A}a_ie_i\|.
\end{align}
\edfn
\begin{Prop}\label{propp}
Let $(e_i)_{i=1}^N$  be  $BQG(\tau)$ and have property $P(\tau)$. Then $(e_i)_{i=1}^N$ is quasi-greedy.
\end{Prop}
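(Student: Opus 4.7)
The plan is to deduce $\|G_m(x)\| \le K\|x\|$ by comparing $G_m(x)$ with $\mathcal{G}_m^\tau(x) = P_B(x)$, whose norm is controlled by the $BQG(\tau)$ hypothesis, and using property $P(\tau)$ to estimate the difference. The weak-greedy property of the BGA yields $\min_{i\in B}|e_i^*(x)| \ge \tau\max_{j\notin B}|e_j^*(x)|$; setting $\nu = \min_{i\in B}|e_i^*(x)|$ and comparing with the analogous (stronger) condition defining $\Lambda_m(x)$, a short case analysis shows that $\Lambda_m(x)\triangle B$ is contained in the exchange band $\mathcal{E} := \{n : \tau\nu \le |e_n^*(x)| \le \nu/\tau\}$. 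On $\mathcal{E}$ the coefficient magnitudes of $x$ have ratio at most $1/\tau^2$, so after normalization by $\tau\nu$ they lie in $[1,1/\tau^2]$ — exactly the range in which property $P(\tau)$ applies.

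Next I would apply $P(\tau)$ on $\mathcal{E}$ to obtain an unconditionality-type estimate. The inequality $\max_\pm \|\sum_{\mathcal{E}} \pm e_i\| \le (C_P/(\tau\nu))\|P_\mathcal{E}(x)\|$ combined with a standard convex-combination argument (writing any vector on $\mathcal{E}$ with $|c_i| \le \|c\|_\infty$ as an average of $\pm\|c\|_\infty$ sign vectors) upgrades to $\|\sum_{\mathcal{E}} c_i e_i\| \le (C_P/\tau^2)\|P_\mathcal{E}(x)\|$ whenever $|c_i| \le \nu/\tau$. Applied to $G_m(x) - \mathcal{G}_m^\tau(x) = P_{\Lambda_m\setminus B}(x) - P_{B\setminus\Lambda_m}(x)$, which is supported in $\mathcal{E}$ with coefficients of magnitude at most $\nu/\tau$, this gives $\|G_m(x) - \mathcal{G}_m^\tau(x)\| \le (C_P/\tau^2)\|P_\mathcal{E}(x)\|$. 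Combined with $\|\mathcal{G}_m^\tau(x)\| \le C_{BQG}\|x\|$, the triangle inequality reduces the proof to bounding $\|P_\mathcal{E}(x)\|$ by $\|x\|$.

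The main obstacle is this last bound. Writing $\mathcal{E} = T_2 \setminus T_1$ with $T_1 = \{|e_n^*(x)| > \nu/\tau\}$ and $T_2 = \{|e_n^*(x)| \ge \tau\nu\}$, one has $T_1 \subseteq B \subseteq T_2$, suggesting an application of $BQG(\tau)$ at the step counts $|T_1|$ and $|T_2|$. The difficulty is that the BGA need not traverse $\mathcal{E}$ monotonically — the sets selected at these intermediate steps can differ from $T_1$ and $T_2$ by elements inside the band. One handles this either by iterating the exchange-band analysis at the intermediate thresholds (the same structural inclusions propagate, so the accumulated constant is a geometric series in $\tau$), or by applying $BQG(\tau)$ to a perturbed vector $\tilde x$ in which the $T_1$-coordinates are rescaled to force the BGA to select them first, with $P(\tau)$ used to control $\|\tilde x\|$ in terms of $\|x\|$. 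Either route yields $\|P_\mathcal{E}(x)\| \le C'\|x\|$, and hence $\|G_m(x)\| \le (C_{BQG} + 2C_PC'/\tau^2)\|x\|$, which gives quasi-greediness.
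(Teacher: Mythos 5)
Your proposal sets out to prove a statement that the paper itself does not prove: Proposition~\ref{propp} is recalled from \cite{DKSW} and used as a black box, so the relevant comparison is with the argument there, not with anything in this paper. Your opening reductions are correct and are indeed the natural first moves: the branch set $B=B_m^\tau(x)$ satisfies $\min_{i\in B}|e_i^*(x)|\ge\tau\max_{j\notin B}|e_j^*(x)|$, the symmetric difference $\Lambda_m(x)\triangle B$ sits in the band $\mathcal{E}=\{n:\tau\nu\le|e_n^*(x)|\le\nu/\tau\}$, and property $P(\tau)$ together with the sign-averaging convexity argument gives $\|G_m(x)-\mathcal{G}_m^\tau(x)\|\le (C/\tau^2)\|P_{\mathcal{E}}(x)\|$. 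The problem is that everything after this point is asserted rather than proved, and the deferred step is the entire content of the proposition: once one has the $BQG(\tau)$ bound, quasi-greediness is (up to constants) \emph{equivalent} to uniform bounds on projections onto coefficient bands of ratio $\tau^{-2}$, so ``it remains to bound $\|P_{\mathcal{E}}(x)\|$ by $C'\|x\|$'' is essentially a restatement of the theorem, not a residual technicality.

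Neither of your two suggested routes closes this gap. The iteration route replaces $T_1,T_2$ by the branch sets at the first steps containing them; the error terms are then projections onto subsets of bands of exactly the same kind (one inside $\mathcal{E}$, one in the band just below), so the procedure regenerates the original problem, and the claim that ``the accumulated constant is a geometric series in $\tau$'' has no justification --- there is no mechanism forcing the norms of successive band projections to decay, which is precisely what fails for non-quasi-greedy bases. The perturbation route fails because property $P(\tau)$, i.e.\ \eqref{propertyp}, only compares a vector supported on a single set $A$ with coefficients in the range $[1,\tau^{-2}]$ against sign vectors on that same $A$; it gives no control of $\|\tilde x\|$ in terms of $\|x\|$ when you rescale the coordinates on $T_1$ (whose coefficients can be spread over an arbitrarily large range), nor does it apply to vectors carrying additional small coefficients off the set, and the $BQG(\tau)$ bound for $\tilde x$ is stated in terms of $\|\tilde x\|$, so nothing is gained without that comparison. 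What is actually needed --- and what the argument in \cite{DKSW} exploits --- is the defining consistency property of the branch map (condition $(c)$: the selection depends only on the coefficients above the weak threshold), which allows one to transfer $BQG(\tau)$ information from $x$ to carefully constructed auxiliary vectors and, combined with $P(\tau)$, to control the band projections. Your proposal never invokes this property, and without it the adversarial nature of the branch selection blocks both of your proposed completions.
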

\begin{lem}\label{lemp}
Let $0<\tau<1$. Suppose that $(e_i)_{i=1}^N$ is $BQG(\tau)$ and democratic. If (\ref{propertyp}) is satisfied for all $A$ with
$|A|\leq N/2$ and all scalars $(a_i)_{i\in A}$ with $1\leq |a_i|\leq \frac{1}{\tau^2}$ then $(e_i)_{i=1}^N$ has property $P(\tau)$.
\end{lem}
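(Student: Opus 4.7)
The plan is to upgrade the conclusion of \eqref{propertyp} from small sets ($|A|\le N/2$) to arbitrary $A\ci\{1,\ld,N\}$ by cutting $A$ into a bounded number of consecutive blocks, ordered by coefficient size, and bounding each block via $BQG(\tau)$.

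Fix $A$ with $|A|\le N$ and scalars $(a_i)_{i\in A}$ satisfying $1\le|a_i|\le 1/\tau^2$, and set $x:=\sum_{i\in A}a_i e_i$. Order $A=\{i_1,\ld,i_n\}$ so that $|a_{i_1}|\ge\cdots\ge|a_{i_n}|$, and partition this sequence into consecutive blocks $A_1,\ld,A_p$, each of size at most $\lfloor N/2\rfloor$; since $|A|\le N$, one can always arrange $p\le 3$. Because $|a_i|\in[1,1/\tau^2]$ for every $i\in A$, at every step of the branch greedy algorithm on $x$ the index achieving the current maximum coefficient in absolute value automatically lies in $\C A^\tau$; iterating and breaking ties consistently, one obtains a valid mapping $\C G^\tau$ satisfying conditions (a)--(c) in the definition of the $BGA$ that, applied to $x$, selects the indices $i_1,i_2,\ld$ in this order. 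Writing $r_k:=|A_1|+\cdots+|A_k|$, this gives $\C G^\tau_{r_k}(x)=\sum_{i\in A_1\cup\cdots\cup A_k}a_i e_i$, so $BQG(\tau)$ with constant $C$ yields $\|\C G^\tau_{r_k}(x)\|\le C\|x\|$ for every $k$, and hence by the triangle inequality
\[
\left\|\sum_{i\in A_k}a_i e_i\right\|=\|\C G^\tau_{r_k}(x)-\C G^\tau_{r_{k-1}}(x)\|\le 2C\|x\|,\qquad 1\le k\le p.
\]

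With these block estimates, I would next apply \eqref{propertyp} to each $A_k$ (the size is at most $\lfloor N/2\rfloor$ and the coefficients still lie in $[1,1/\tau^2]$) to obtain $\max_\pm\|\sum_{i\in A_k}\pm e_i\|\le 2CC_0\|x\|$. Applying the triangle inequality to any fixed sign pattern on $A$ then yields
\[
\max_\pm\left\|\sum_{i\in A}\pm e_i\right\|\le\sum_{k=1}^p\max_\pm\left\|\sum_{i\in A_k}\pm e_i\right\|\le 2pCC_0\|x\|\le 6CC_0\|x\|,
\]
which is exactly property $P(\tau)$ for $A$.

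The main obstacle will be justifying rigorously that the ``select the current maximum'' rule can be realised as a bona fide branch greedy mapping $\C G^\tau$: ties among equal-modulus coefficients must be broken by a rule depending only on the restriction of the data to $\C A^\tau$, so as to comply with axiom (c), and one must check that the condition $|a_i|\in[1,1/\tau^2]$ is preserved by all residuals so that the max-coefficient index remains in $\C A^\tau$ at every stage (this is immediate here because removing picked indices does not alter the remaining $a_i$). Note that the democracy assumption on $(e_i)_{i=1}^N$ does not appear to enter this argument directly; it presumably plays its role elsewhere (for example, in Proposition~\ref{propp}, which this lemma is designed to support).
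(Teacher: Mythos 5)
The paper does not actually prove Lemma~\ref{lemp}: it is recalled from \cite{DKSW} as a black box, so there is no in-text argument to compare yours against. Judged on its own, your proof is essentially correct, and is in fact leaner than the statement suggests, since it never uses the democracy hypothesis. The one step that needs care is where you invoke the $BQG(\tau)$ constant for the algorithm you construct (``pick a current maximizer, with a fixed priority order for ties''). That rule does define a legitimate branch greedy mapping --- every maximizer lies in $\mathcal{A}^\tau(y)$, the maximum value and the set of maximizers are determined by the coefficients restricted to $\mathcal{A}^\tau(y)$, and the rule is scale invariant, so (a)--(c) hold --- but whether the $BQG(\tau)$ constant applies to it depends on whether $BQG(\tau)$ is read as a statement about \emph{every} admissible $\mathcal{G}^\tau$ or about one fixed algorithm (the latter is what Theorem~\ref{t1} actually supplies). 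Fortunately your argument does not need the max-first ordering at all: run the \emph{given} algorithm on $x=\sum_{i\in A}a_ie_i$. Since every residual is supported in $A$ with its nonzero original coefficients, the algorithm selects the elements of $A$ one by one until $A$ is exhausted; letting $B_m$ denote the set of the first $m$ selected indices, choose $0=m_0\le m_1\le m_2\le m_3=|A|$ with $m_k-m_{k-1}\le\lfloor N/2\rfloor$ and set $A_k:=B_{m_k}\setminus B_{m_{k-1}}$. Then $\sum_{i\in A_k}a_ie_i=\mathcal{G}^\tau_{m_k}(x)-\mathcal{G}^\tau_{m_{k-1}}(x)$ has norm at most $2C\|x\|$ by $BQG(\tau)$, each $|A_k|\le N/2$ so the restricted form of (\ref{propertyp}) applies to it, and the triangle inequality over the at most three blocks finishes exactly as you wrote. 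With this adjustment the proof is complete; that democracy goes unused simply means the lemma holds under weaker hypotheses than stated (and democracy is available anyway in the application to Theorem~\ref{t1}).
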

\begin{thm}\label{t1}
Let $0<\tau <1$. Suppose that $(e_i)_{i=1}^{N}$ is a basis of $X$
and there exists a constant $C$ such that
\begin{align}\label{bage1}
\|x-\mathcal{G}_m^\tau(x)\|\leq C \|x-P_A(x)\|
\end{align}
where $x\in X$, $0\leq k \leq N$, $A \subset\mathbb{N}$ with  $|A|\leq
m$, and either $A< \alpha^\tau_m(x)$ or $A>\beta_m^\tau(x)$. Then $(e_i)_{i=1}^{N}$ is almost greedy.
\end{thm}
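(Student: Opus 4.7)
The plan is to mirror the proofs of Theorems~\ref{pg} and \ref{gag}, adapted to the branch greedy setting: extract $BQG(\tau)$ and democracy from the hypothesis via test-element arguments, and then combine them with Proposition~\ref{propp} and Lemma~\ref{lemp} to conclude that $(e_i)_{i=1}^N$ is almost greedy.

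Setting $A = \emptyset$ in $(\ref{bage1})$ gives $\|\mathcal{G}_m^\tau(x)\| \leq (1+C)\|x\|$, so $(e_i)_{i=1}^N$ is $BQG(\tau)$. For conservativity, given disjoint $A < B$ with $|A| \leq |B| = m$, the test element $x = 1_A + M\,1_B$ with any $M > 1/\tau$ has $|e_i^*(x)| = 1 < \tau M$ for $i \in A$, so no $A$-index ever enters $\mathcal{A}^\tau$ during iteration; hence $\mathcal{G}_m^\tau(x) = M\,1_B$ with $\alpha_m^\tau(x) = \min B$. Applying $(\ref{bage1})$ with $A' = A$ (permitted since $A < \min B$) gives $\|1_A\| \leq CM\|1_B\|$, and letting $M \downarrow 1/\tau$ yields $\|1_A\| \leq (C/\tau)\|1_B\|$. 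The symmetric construction with $B < A$ gives reverse conservativity, and Lemma~\ref{crd} (which was proved so as to cover finite bases) promotes both to democracy.

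To upgrade $BQG(\tau)$ to full quasi-greediness I invoke Proposition~\ref{propp}. By Lemma~\ref{lemp} it suffices to verify property $P(\tau)$ on sets with $|A| \leq N/2$; for such an $A$, scalars $(a_i)_{i\in A}$ with $1 \leq |a_i| \leq 1/\tau^2$, and signs $\varepsilon_i$, I would factor the desired inequality as $\|\sum_A \varepsilon_i e_i\| \leq C_1\|1_A\| \leq C_1 C_2\|\sum_A a_i e_i\|$. The first bound follows from the test-element argument above applied to $x = \sum_A \varepsilon_i e_i + M\,1_B$, with $B$ of size $|A|$ disjoint from $A$ and on one side of it; when $A$ ``straddles'' so that no such $B$ exists, split $A = A_1 \sqcup A_2$ at $\lfloor N/2 \rfloor$, apply the non-straddling argument to each $A_j$ with $B_j$ in the opposite half, and combine via the triangle inequality together with the basis-constant bound $\|1_{A_j}\| \leq K_b\|1_A\|$ (since $A_j$ arises as the projection of $A$ onto an initial segment). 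The second bound uses a $BQG(\tau)$-analog of $(\ref{min})$ from the machinery of \cite{DKSW}: applied to $y = \sum_A a_i e_i$ at stage $m = |A|$ (at which the branch greedy algorithm has exhausted $A$), it yields $\min_{i\in A}|a_i|\cdot\|1_A\| \leq C(\tau)\|y\|$, and $\min|a_i| \geq 1$ concludes.

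The main obstacle is this verification of property $P(\tau)$ for small sets, particularly handling the straddling case via the split and invoking the appropriate $BQG(\tau)$-analog of $(\ref{min})$ for the lower bound $\|1_A\| \leq C\|\sum_A a_i e_i\|$. Once property $P(\tau)$ is in hand, Proposition~\ref{propp} delivers full quasi-greediness, and quasi-greedy plus democratic gives almost greediness, completing the proof.
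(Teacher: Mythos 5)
Your overall architecture matches the paper's: extract $BQG(\tau)$ from \eqref{bage1} with $A=\emptyset$, get conservativity and reverse conservativity from test elements of the form $1_A+M1_B$, promote to democracy via Lemma~\ref{crd}, verify property $P(\tau)$ on sets of size at most $N/2$, and finish with Lemma~\ref{lemp}, Proposition~\ref{propp} and the quasi-greedy plus democratic characterization of almost greedy bases. The first half of your property-$P(\tau)$ verification --- the bound $\|\sum_{i\in A}\pm e_i\|\le C_1\|1_A\|$ obtained from the test element $\sum_{i\in A}\pm e_i+M1_B$ with $B$ on one side of $A$, splitting $A$ at $\lfloor N/2\rfloor$ in the straddling case --- is sound and is essentially the paper's estimate \eqref{es2}.

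The gap is the second half, $\|1_A\|\le C_2\|\sum_{i\in A}a_ie_i\|$, which you propose to deduce from ``a $BQG(\tau)$-analog of \eqref{min}.'' No such analog is available. Inequality \eqref{min} (equivalently, $\min_i|a_i|\,\|1_A\|\le 2K\|\sum_{i\in A}a_ie_i\|$) is proved in \cite{S} by thresholding the coefficients at the level $\min_i|a_i|$ and invoking genuine quasi-greediness --- precisely the property you are still trying to establish at this stage; $BQG(\tau)$ only controls the branch residuals and yields no lower bound of this type. Indeed, if $BQG(\tau)$ together with democracy implied $\min_i|a_i|\,\|1_A\|\le C\|\sum_{i\in A}a_ie_i\|$, then combined with your first half it would give property $P(\tau)$ outright, making Lemma~\ref{lemp} and the almost-greedy-type hypothesis \eqref{bage1} superfluous in the framework of \cite{DKSW}; that is not the case. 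The repair stays entirely within the test-element technique you already deployed, but uses the full strength of \eqref{bage1} rather than mere $BQG(\tau)$: with $D_j$ disjoint from $A_j$ and lying on one side of it, apply \eqref{bage1} to $x_j=\theta\sum_{i\in D_j}e_i+\sum_{i\in A_j}a_ie_i$ for $0<\theta<\tau$; since $\theta<\tau\le\tau\min_i|a_i|$, the branch algorithm exhausts $A_j$ before touching $D_j$, and projecting onto $D_j$ gives $\theta\|1_{D_j}\|\le C\|\sum_{i\in A_j}a_ie_i\|$, which chains with your first half to close the argument. This is exactly how the paper proceeds in \eqref{es1}--\eqref{es2}.
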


\begin{proof}
Clearly, it follows from (\ref{bage1}) that $(e_i)_{i=1}^N$ is
$BQG(\tau)$ with constant $1+C$.  First we prove that (\ref{propertyp}) is
satisfied for  all $A\subset\{1,\cdots,N\}$ with
$|A|\leq N/2$ and all scalars $(a_i)_{i\in A}$ with $1\leq |a_i|\leq \frac{1}{\tau^2}$. Choose any $A\subset\{1,\cdots,N\}$ with $|A|=n\leq N/2$ and
scalars $(a_i)_{i\in A}$ such that $1\leq |a_i| \leq1/{\tau^2}$. Now choose
$D\subset\{1,\ldots, N\}$ disjoint from $A$ and $|A|=|D|$. Let
$A=\{a_1<\ldots<a_k\}$ and $D=\{d_1<\ldots< d_k\}$ and without loss
of generality we can assume that $a_1<d_k$. Now by using the fact
that $A\cap D=\emptyset$ and arguments of Lemma~\ref{crd} we can
find $j$, $0\leq j\leq k$ such that
\begin{align*}
A=A_1\cup A_2,~D=D_1\cup D_2
\end{align*}
where $|A_i|=|D_i|=k_i$ for $i=1,2$, $A_1=\{a_1,\ldots,a_j\}$,
$A_2=\{a_{j+1},\ldots, a_k\}$, $D_1=\{d_{k-j+1},\dots,d_k\}$,
$D_2=\{d_1,\ldots,d_{k-j}\}$, $A_1<D_1$ and $A_2>D_2$.

Consider $x_j=\theta \sum_{i\in D_j}e_i+\sum_{i\in A_j}a_ie_i$ where
$j=1,2$ and $0<\theta<\tau$. Then
$\mathcal{G}_{k_j}^\tau(x_j)=\sum_{i\in A_j}a_ie_i$ and thus for
$j=1,2$  \eqref{bage1} yields
\begin{align}\label{es1}
\theta\|\sum_{i\in D_j}e_i\|\leq C\|\sum_{i\in A_j}a_ie_i\|.
\end{align}
If we consider $y_j=\sum_{i\in D_j}e_i+\theta \sum_{i\in A_j}\pm
e_i$, then $\mathcal{G}_{k_j}^\tau(y_j)=\sum_{i\in D_j}e_i$ and thus
for $j=1,2$ \eqref{bage1} yields
\begin{align}\label{es2}
\theta\|\sum_{i\in A_j}\pm e_i\|\leq C\|\sum_{i\in D_j}e_i\|.
\end{align}

From $(\ref{es1})$ and $(\ref{es2})$ we have
\begin{align*}
\theta\|\sum_{i\in A}\pm e_i\| &\leq \theta(\|\sum_{i\in A_1}\pm
e_i\|+\|\sum_{i\in A_2}\pm e_i\|)\\
                               & \leq \theta C(\|\sum_{i\in
D_1}e_i\|+ \|\sum_{i\in D_2}e_i\|)\\
                                &\leq C^2(\|\sum_{i\in A_1}a_i
e_i\|+\|\sum_{i\in A_2}a_i e_i\|)\\ &\leq C^2(2K_b+1)\|\sum_{i\in
A}a_i e_i\|
\end{align*}
where $K_b$ is the basis constant.

Since  $\theta<\tau$ is arbitrary, so we can write
\begin{align}
\underset{\pm} max \|\sum_{i\in A}\pm e_i\|\leq
\frac{C^2(2K_b+1)}{\tau} \|\sum_{i\in A}a_i e_i\|.
\end{align}

Next we prove that $(e_i)_{i=1}^N$ is democratic. From Lemma~\ref{crd} it follows that it is sufficient to show that
$(e_i)_{i=1}^N$ is both conservative and reverse conservative.  Choose $A,B\subset\{1,\ldots,N\}$ with $A<B$ and
$|A|\leq |B|=k$. Consider $x=\theta 1_A+1_B$, where $0<\theta<\tau$.
Then $\mathcal{G}_{k}^\tau(x)=1_B$, so
\begin{align*}
\theta\|1_A\|\leq C \|1_B\|.
\end{align*}

Since  $\theta<\tau$ is arbitrary, so we get $(e_i)_{i=1}^N$ is
$\frac{C}{\tau}$ conservative. By similar reasoning,
 $(e_i)_{i=1}^N$ is $\frac{C}{\tau}$ reverse
conservative. Thus from Lemma~\ref{crd} it follows that
$(e_i)_{i=1}^N$ is democratic.
It follows from   Proposition~\ref{propp} and Lemma~\ref{lemp} that $(e_i)_{i=1}^N$ is quasi-greedy.  We have proved that $(e_i)_{i=1}^N$ is quasi-greedy and democratic,  and hence almost greedy by \cite{S}.
\end{proof}

To conclude this section we prove the converse of Theorems \ref{t1}.
in the  more general setting of the WTGA.
\begin{thm}\label{t3}
Let $(e_n)$ be almost greedy Markushevich basis of a Banach space
$X$. Let $0<\tau<1$ be weakness parameter and $0\leq \lambda <1$ be any scalar. Then exists a constant
$C$ such that for any $x\in X$, $A\subset \mathbb{N}$ with $|A\cap
\Lambda_m^{\tau}(x)|\leq \lambda m, ~|A|\leq m$ and $a_i\in
\mathbb{R}$, $i\in A$, we have
\begin{align*}
\|x-{G}_m^{\tau}(x)\| \leq ~C   \|x-\underset{i\in
A}{\sum}a_ie_i\|.
\end{align*}

\end{thm}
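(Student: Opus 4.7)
The plan is to adapt the proof of Theorem~\ref{thm: agcharacterization}. Two modifications are needed: throughout, $|e_{\rho(m)}^*(x)|$ is replaced by $\mu:=\min_{j\in \Lambda_m^\tau(x)}|e_j^*(x)|$, and an extra step is added that bridges from the ordinary greedy sum $G_m(x)$ to the weak greedy sum $G_m^\tau(x)$ via a bound on $\|G_m(x)-G_m^\tau(x)\|$.

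First, the weak-greedy property gives $|e_k^*(x)|\le \mu/\tau$ for every $k\notin\Lambda_m^\tau(x)$, and a short case analysis (splitting on whether $\rho(m+1)\in\Lambda_m^\tau(x)$) shows $|e_{\rho(m+1)}^*(x)|\le \mu/\tau$: either $\rho(m+1)$ lies outside $\Lambda_m^\tau(x)$ and the bound is immediate, or else some $\rho(k)$ with $k\le m$ must lie outside $\Lambda_m^\tau(x)$ and then $|e_{\rho(m+1)}^*(x)|\le |e_{\rho(k)}^*(x)|\le \mu/\tau$. Setting $y=x-\sum_{i\in A}a_ie_i$, the assumption $|A\cap \Lambda_m^\tau(x)|\le \lambda m$ implies that at least $(1-\lambda)m$ indices of $\Lambda_m^\tau(x)$ lie outside $A$, and the $y$-coefficients at these indices agree with those of $x$ and are of magnitude at least $\mu$. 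Quasi-greediness applied to $G_{\lfloor(1-\lambda)m\rfloor}(y)$ combined with $(\ref{agmin})$ and $\phi((1-\lambda)m)\ge(1-\lambda)\phi(m)$ then yields
\[
\|y\|\ge C_1(1-\lambda)\phi(m)\mu.
\]

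Two applications of $(\ref{cuc})$ furnish the matching upper bounds. Applied to $G_{2m}(x)-G_m(x)$, whose $m$ nonzero coefficients are each bounded by $|e_{\rho(m+1)}^*(x)|\le \mu/\tau$, it gives $\|G_{2m}(x)-G_m(x)\|\le 2K(\mu/\tau)\phi(m)\le C_2\|y\|$. Applied to $G_m(x)-G_m^\tau(x)$, whose support lies in $\Lambda_m(x)\triangle\Lambda_m^\tau(x)$, it gives $\|G_m(x)-G_m^\tau(x)\|\le C_3\|y\|$; here the essential point is that the indices in $\Lambda_m(x)\setminus\Lambda_m^\tau(x)$ have coefficients $\le \mu/\tau$ by the weak-greedy condition, while the indices in $\Lambda_m^\tau(x)\setminus\Lambda_m(x)$ have coefficients $\le |e_{\rho(m+1)}^*(x)|\le \mu/\tau$ since they lie outside the top-$m$ positions of $x$.

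Finally, I would import the dichotomy from Theorem~\ref{thm: agcharacterization}: if $\|x-G_{2m}(x)\|\le\tfrac12\|x-G_m(x)\|$ then the triangle inequality gives $\|x-G_m(x)\|\le 2\|G_{2m}(x)-G_m(x)\|\le 2C_2\|y\|$; otherwise $\|x-G_m(x)\|\le 2\|x-G_{2m}(x)\|\le C_4\sigma_m(x)\le C_4\|y\|$ by \cite[Theorem~3.3]{S} together with $\sigma_m(x)\le\|y\|$. Combining with the bridging bound yields
\[
\|x-G_m^\tau(x)\|\le\|x-G_m(x)\|+\|G_m(x)-G_m^\tau(x)\|\le C\|y\|,
\]
where $C$ depends only on $\tau$, $\lambda$, and the almost-greedy constants of $(e_n)$. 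The main obstacle will be the bridging step: one must verify that \emph{both} halves of $\Lambda_m(x)\triangle \Lambda_m^\tau(x)$ have coefficients uniformly controlled by $\mu/\tau$, which requires simultaneously invoking the weak-greedy inequality on $\Lambda_m(x)\setminus\Lambda_m^\tau(x)$ and the greedy ordering of $x$ on $\Lambda_m^\tau(x)\setminus\Lambda_m(x)$.
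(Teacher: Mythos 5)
Your proof is correct, and its core is the same as the paper's: a lower bound $\|x-\sum_{i\in A}a_ie_i\|\gtrsim (1-\lambda)\phi(m)\mu$ obtained from quasi-greediness, \eqref{agmin} and the hypothesis $|A\cap\Lambda_m^\tau(x)|\le\lambda m$, followed by the same dichotomy on whether $\|x-G_{2m}(x)\|\le\tfrac12\|x-G_m(x)\|$, using \cite[Theorem~3.3]{S} in the second case. The one genuine difference is the bridge from $G_m(x)$ to $G_m^\tau(x)$. The paper disposes of this in one line by citing the Konyagin--Temlyakov inequality $\|x-G_m^\tau(x)\|\le C(\tau)\|x-G_m(x)\|$ from \cite{T1} and otherwise works entirely with $|e_{\rho(m)}^*(x)|$ (noting that coefficients on $\Lambda_m^\tau(x)$ are at least $\tau|e_{\rho(m)}^*(x)|$). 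You instead prove the bridge by hand: you bound $\|G_m(x)-G_m^\tau(x)\|$ via \eqref{cuc} on the symmetric difference $\Lambda_m(x)\triangle\Lambda_m^\tau(x)$, checking that both halves carry coefficients at most $\mu/\tau$ (the half outside $\Lambda_m^\tau(x)$ by the weak-greedy inequality, the half outside $\Lambda_m(x)$ because those coefficients are at most $|e_{\rho(m+1)}^*(x)|\le\mu/\tau$), and then absorb $\phi(2m)\le 2\phi(m)$ into the lower bound already established. Your case analysis for $|e_{\rho(m+1)}^*(x)|\le\mu/\tau$ is sound. What your route buys is a self-contained argument with explicit constants depending only on $\tau$, $\lambda$, $K_b$ and the quasi-greedy and democratic constants; what the paper's route buys is brevity, at the cost of importing an external black box. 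Both are valid.
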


\begin{proof}
It follows from \cite{T1} that for any $x\in X$, $m\in \N$ there
exists a constant $C(\tau)$ such that
\begin{align}\label{ewg}
\|x-G_m^{\tau}(x)\|\leq C(\tau)\|x-G_m(x)\|.
\end{align}

The rest of the argument is very similar to Theorem~\ref{thm: agcharacterization}. Let $\phi(n)$ be the fundamental function for an  almost greedy Markushevich basis
$(e_n)$.   In the following inequalities the constants $C_1,C_2$ etc. depend only on $\lambda$ and  the quasi-greedy and democratic constants of $(e_n)$.
 Let $A \subset
\mathbb{N}$ with $|A\cap \Lambda_m^{\tau}(x)|\leq \lambda m$ and
$|A|\leq m$. Then for all coefficients $(a_i)_{i\in A}$,

\begin{align*}
\| x-\underset{i\in A}{\sum}a_ie_i \|& \geq C_1 \|G_{(1-\lambda)m}(x-\underset{i\in A}{\sum}a_ie_i)\|\\
                               &\geq C_2 \tau |e_{\rho(m)}^*(x)|\phi((1-\lambda)m)\\
                               &\geq C_2\tau(1-\lambda)|e_{\rho(m)}^*(x)|\phi(m)\\
                               &\geq C_3 \tau \|G_{2m}(x)-G_m(x)\|\\
                               &= C_3\tau\|(x-G_{m}(x))-(x-G_{2m}(x))\|.
\end{align*}
Here for the second inequality we used $(\ref{agmin})$ and the fact
that $|A\cap \Lambda_m^{\tau}|\leq \lambda m$, so the largest
$(1-\lambda)m$ coefficients  of $ x-\underset{i\in A}{\sum}a_ie_i$ are at least $\tau |e_{\rho(m)}^*(x)|$.

Now consider two cases, first suppose that $\|x-G_{2m}(x)\|\leq
\frac{1}{2} \|x-G_{m}(x)\|$. Then by the  triangle inequality
\begin{align*}
\| x-\underset{i\in A}{\sum}a_ie_i \| \geq
C_3\tau\|(x-G_{m}(x))-(x-G_{2m}(x))\| \geq
\frac{C_3\tau}{2}\|x-G_m(x)\|.
\end{align*}
Now suppose  $\|x-G_{2m}(x)\|>\frac{1}{2} \|x-G_{m}(x)\|$, then by
Theorem 3.3 \cite{S}, we have
\begin{align*}
\|x-G_m(x)\|\leq 2\|x-G_{2m}(x)\|\leq C_3\tau\sigma_m(x)\leq
C_3\tau\| x-\underset{i\in A}{\sum}a_ie_i \|.
\end{align*}

Thus in both the cases we get
\begin{align*}
\|x-G_m(x)\|\leq C_1(\tau) \| x-\underset{i\in A}{\sum}a_ie_i \|
\end{align*}
for some constant $C_1(\tau)$. Now the result follows from
(\ref{ewg}).
\end{proof}

\section*{Acknowledgements}
The second author would like to thank Gideon Schechtman for the
invaluable discussions and suggestions while preparation of this
paper.

\end{document}